\documentclass[]{article}

\usepackage[x11names]{xcolor}
\usepackage{graphicx} 
\usepackage{amsmath}
\usepackage{amsthm}
\usepackage{amssymb}
\usepackage{amsfonts}
\usepackage{tikz}
\usepackage{subcaption}
\usepackage{natbib}
\usepackage{mathtools}
\usepackage{dsfont}
\usetikzlibrary{plotmarks}
\usepackage{lmodern}

\definecolor{mycolor}{named}{Orchid4}

\usetikzlibrary{patterns}
\usetikzlibrary{matrix}
\usetikzlibrary{calc}

\usepackage{comment}

\usepackage[ruled,vlined]{algorithm2e}

\newtheorem{definition}{Definition}
\newtheorem{example}[definition]{Example}
\newtheorem{lemma}[definition]{Lemma}
\newtheorem{cor}[definition]{Corollary}
\newtheorem{theorem}[definition]{Theorem}
\newtheorem{problem}[definition]{Problem}

\DeclareMathOperator{\hcone}{hcone}
\DeclareMathOperator{\vcone}{vcone}
\DeclareMathOperator{\R}{\mathds{R}}
\DeclareMathOperator{\interior}{int}
\newcommand{\epreceqq}{\preceqq_{\!\!\varepsilon}}
 
\newcommand{\preceqq}{\;\raisebox{-0.12em}{\scalebox{1.15}{\ensuremath{\mathrel{\substack{\prec\\[-.15em]=}}}}}\;}

\DeclareMathOperator{\rank}{rank}

\usepackage[hidelinks]{hyperref}
\usepackage{url}
\usepackage{authblk}

\title{Representing the Non-dominated Set of Multi-objective Optimization Problems with Polyhedral Ordering Cones}
\author{Chiara Weuste}
\author{Michael Stiglmayr}
\affil{School of Mathematics and Natural Sciences, \\University of Wuppertal, Wuppertal, Germany}
\date{}

\begin{document}

\maketitle
\begin{abstract}
In the literature, research on representing the non-dominated set has focused almost exclusively on multi-objective optimization problems with Pareto-dominance. However, general ordering cones are used to incorporate preferences of decision makers into the optimization process and to model other specific types of optimization problems, such as ordinal optimization problems. In this work, we consider several representation approaches based on representation relations in the context of multi-objective optimization problems with polyhedral ordering cones. Our main research question is whether existing theory and solution approaches developed for the Pareto case can be used to address the more general setting.

    \medskip\noindent
    \textbf{Keywords:} multi-objective optimization, representation, polyhedral ordering cone, ordinal optimization, preference modeling
\end{abstract}

\section{Introduction}

Many real-life optimization problems involve multiple, often conflicting objectives that have to be considered simultaneously. For example, a decision maker is not only interested in minimizing costs but also in optimizing additional criteria such as sustainability, quality, or social impacts. Such problems can be formulated as multi-objective optimization problems (MOP), where each goal is formulated as an objective function. Usually, the objective functions are in conflict, i.\,e., there does not exist a solution minimizing all objectives. Accordingly, to compare multidimensional outcome vectors one needs a new concept of optimality, e.\,g., \emph{Pareto optimality}, namely all solutions that cannot be improved in one objective function without worsening at least one other objective. Dominance relations, which are compatible with addition and scalar multiplication, can be expressed using ordering cones. Then, a solution dominates the region in the objective space given by the ordering cone attached to its objective vector.

In contrast to the majority of the existing literature, which focuses on Pareto-domi\-nance, this paper considers dominance relations defined by general cones, more precisely, pointed polyhedral cones. This generalized framework is particularly relevant in applications where decision makers express preferences regarding the relative importance of objectives, described by preference cones \citep{hunt10relative,nasrabadi-2019,wu-2025}. Moreover, several optimization problems can be reformulated into MOP with non-Pareto ordering cones. A notable example is provided by ordinal optimization problems, where costs are not represented by additive numerical values but by ordered categories that express qualitative evaluations \citep{schaefer2019shortest,klamroth2022matroid,klamroth2023reformulation,figueira2025computational}.

A fundamental challenge of discrete MOP is that the number of efficient solutions can grow exponentially with the input size \citep[see e.g.,][]{ehrgott-2005}. In continuous settings, the cardinality of the efficient set may even be infinite. Consequently, presenting the complete solution set to a decision maker is often impractical or even impossible. Instead, it is desirable to compute a small subset of solutions that adequately represents the efficient set, enabling the decision maker to select a preferred solution from a manageable number of alternatives. This problem is commonly referred to as the representation problem.

In this paper, we investigate representation problems for MOP with pointed polyhedral ordering cones. Building upon existing approaches developed for Pareto-dominance MOP \citep{sayn2000representation,vaz15representation,humbertropers25representation}, we adapt and extend these concepts to the more general setting.

The remainder of this paper is organized as follows. In Chapter~\ref{sec:ConesAndRelations}, we introduce the fundamental concepts and notation required throughout the paper. Chapter~\ref{sec:Representations} discusses the notion of a good representation and presents several representation relations that provide a formal characterization of the representation problem. The focus here is on investigating how existing methods for the Pareto ordering cone can be extended to more general ordering cones. Subsequently, Chapter~\ref{sec:Outlook} concludes the paper with an outlook on promising future research fields.

\section{Cones and Relations} \label{sec:ConesAndRelations}
This Chapter provides an overview of the basic concepts and theory of multiobjective optimization problems, with a particular focus on ordering cones and dominance relations. A comprehensive introduction to multiobjective optimization is given, e.g., in \cite{sawaragi85theory, ehrgott-2005}.
\begin{definition}
    A nonempty set \(C\subseteq \R^p\) is called a \emph{cone} if \(\lambda\, C \subset C\) for all \(\lambda>0\). A cone \(C\) is called
    \begin{itemize}
        \item \emph{convex} if \(\lambda\,c+ (1-\lambda) \, \bar{c}\in C\) for all \(c,\bar{c}\in C,\) and for all \(0<\lambda<1\),
        \item \emph{polyhedral} if there exists a matrix \(A_C\in \R^{m\times p}\setminus\{0\}\) such that \[C=\hcone(A_C)\coloneqq\{y\in\R^p:A_C\,y\geqq0\},\]
        \item \emph{pointed} if for all \(c\in C,\,c\neq 0 \implies (-c)\notin C\).
    \end{itemize} 
\end{definition}

A polyhedral cone is defined by a matrix whose row vectors correspond to the normal vectors of the facets that define the cone. Thus, a polyhedral cone can be seen as the intersection of \(m\) halfspaces. Accordingly, every polyhedral cone is convex. Alternatively, polyhedral cones can be described by their extreme rays.

\begin{theorem}[Weyl-Minkoski Theorem, cf.~\citealt{ziegler1995lectures}]
    A cone \(C\subseteq\R^p\) is finitely generated by \(n\) vectors in \(\R^p\), i.\,e., 
    \begin{align*}
         C=\vcone(B_C)\coloneqq\{B_C\,\lambda \colon \lambda\in\R^n,\;\lambda\geqq 0\} \text{ for some } B_C\in\R^{p\times n}
    \end{align*}
    if and only if it is a finite intersection of \(m\) halfspaces in \(\R^p\), i.\,e.,
    \begin{align*}
        C= \hcone(A_C)\coloneqq\{y\in\R^p:A_C\,y\geqq 0\} \text{ for some } A_C\in\R^{m\times p}.
    \end{align*}
\end{theorem}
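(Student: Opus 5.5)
The plan is to prove both directions with Fourier--Motzkin elimination, and to get the harder direction from the easier one via polar duality.

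\emph{Step 1: finitely generated implies polyhedral.} Let \(C=\vcone(B_C)\). Then \(C\) is the projection onto the \(y\)-coordinates of the polyhedral cone
\[P=\{(y,\lambda)\in\R^p\times\R^n : y-B_C\lambda\geqq 0,\; B_C\lambda-y\geqq0,\;\lambda\geqq 0\}.\]
Eliminate the variables \(\lambda_1,\dots,\lambda_n\) one at a time. At each stage, split the current homogeneous inequalities into three groups: those with positive, negative and zero coefficient on the variable being removed. Keep the zero group, and add every positive combination of one inequality from the positive group with one from the negative group that cancels the variable. A direct check shows that the projection of a set of the form \(\{A x\geqq 0\}\) is exactly the solution set of the new system. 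All right-hand sides stay \(0\), so after \(n\) steps we obtain a matrix \(A_C\) with \(C=\hcone(A_C)\). If every inequality has been eliminated, append the zero row, so that the description is still of the form \(\hcone\) (this gives \(C=\R^p\)).

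\emph{Step 2: the polar.} For a cone \(K\), define \(K^\circ=\{z : z^\top y\geqq 0\ \forall y\in K\}\). Unwinding the definitions gives two identities: \(\vcone(B)^\circ=\hcone(B^\top)\), and dually \(\hcone(A)=\vcone(A^\top)^\circ\).

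\emph{Step 3: polyhedral implies finitely generated.} Let \(C=\hcone(A_C)\) and set \(D=\vcone(A_C^\top)\), so that \(C=D^\circ\) by Step 2. Apply Step 1 to \(D\): there is a matrix \(M\) with \(D=\hcone(M)\). Now let \(B\) have the rows of \(M\) as columns, and put \(E=\vcone(B)\). I claim \(E=C\).

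For \(E\subseteq C\): each row \(m\) of \(M\) satisfies \(m^\top d\geqq0\) for all \(d\in D\), so \(m\in D^\circ=C\). Since \(C\) is a convex cone, \(E\subseteq C\).

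For \(C\subseteq E\): suppose some \(y\in C\) lies outside \(E\). By Step 1, \(E\) is itself polyhedral, say \(E=\hcone(N)\), and in particular closed. Since \(y\notin E\), some row \(w\) of \(N\) has \(w^\top y<0\). On the other hand \(w^\top b\geqq0\) for every column \(b\) of \(B\), that is, \(Mw\geqq0\), so \(w\in\hcone(M)=D\). Then \(y\in C=D^\circ\) forces \(w^\top y\geqq0\), a contradiction.

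The main obstacle is this last inclusion, which in general needs a separation (Farkas) argument. The trick above avoids any topology: the row \(w\) of \(N\) serves directly as the separating functional, because Step 1 has already given \(E\) a polyhedral description. The remaining care lies in the correctness of the Fourier--Motzkin step in Step 1. There one must show that any \(y\) satisfying all the combined inequalities can be lifted: the variable being eliminated has to admit a value between the maximum of the lower bounds and the minimum of the upper bounds. This holds exactly because every pairwise combination is satisfied, with the cases of empty positive or negative groups handled trivially.
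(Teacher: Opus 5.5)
The paper gives no proof of this statement and only cites it from Ziegler's \emph{Lectures on Polytopes}; your proof is correct and complete, and it follows the standard argument for this theorem. Fourier--Motzkin elimination shows that $\vcone(B)$, viewed as a projection of a polyhedral cone, equals some $\hcone(A)$. The converse then follows from $\hcone(A)=\vcone(A^\top)^\circ$ by applying this first direction twice, with the second application supplying the separating row $w$ in place of a Farkas argument; your aside that $E$ is closed is never used and can be dropped.
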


The columns of matrix \(B_C\) correspond to the extreme rays of the cone \(C\). Note that the matrices \(A_C\) and \(B_C\) for a given cone \(C\) are not uniquely determined. 

In slide abuse of the notation, whenever we write in the following the sum of two sets of vectors, we mean the Minkoski sum of the two sets, i.\,e., every vector in the first set is added to every vector in the second set. We use the analogous convention for subtraction.

\begin{definition}[cf.~\citealt{engau-2007}]
    Let \(Y\subset\R^p\) be a nonempty set and let \(C\subset\R^p\) be a convex, pointed cone.
    \begin{align}
        N(Y,C)&\coloneqq\{y\in Y:(y-C\setminus\{0\})\cap Y =\emptyset\}, \label{eq:CNonDom}\\
        N_w(Y,C)&\coloneqq\{y\in Y \colon (y- \interior(C))\cap Y =\emptyset\},
    \end{align}
    are called the \emph{\(C\)-non-dominated} set and the \emph{weakly \(C\)-non-dominated} set of \(Y\), respectively. Thereby, \(\interior(C)\) denotes the interior of \(C\). Furthermore, we say that \(y\) \emph{\(C\)-dominates} \(\bar{y}\) denoted as \(y \preceq_{C} \bar{y}\) if \(y\in \bar{y}-C\setminus\{0\}\), and that \(y\) \emph{weakly \(C\)-dominates} \(\bar{y}\) denoted as \(y \preceqq_{\!\!C} \, \bar{y}\) if \(y\in \bar{y}-\interior(C)\).
\end{definition}

Consequently, \(N_w(Y,C)\) contains all points in \(Y\), which are not weakly \(C\)-dominated by any other point in \(Y\) and \(N(Y,C)\) contains all points in \(Y\), which are not \(C\)-dominated by any other point in \(Y\). Note that the zero vector is excluded in~\eqref{eq:CNonDom} to obtain an irreflexive ordering relation induced by cone \(C\):

\begin{align*}
    \mathcal{R}_C\coloneqq\{(y^1,y^2)\in\R^p\times\R^p:y^2-y^1\in C\}
\end{align*}

In the following we will denote pointed convex cones as \emph{ordering cones}. 

\begin{problem}
    The multi-objective optimization problem with ordering cone \(C\) can be written as
    \begin{align*}
        \textstyle\min_C \quad  &(f_1(x),\dots,f_p(x))^\top\tag{$\text{MOP}_C$}\label{eq:MOP_C}\\
        \mathrm{s.t.}\quad  &x\in X,
    \end{align*}
    with \(X\neq \emptyset\), the set of \emph{feasible solutions} in the \emph{decision space}, \(p\geq 2\) objectives (\(f_j:X\to\R\), \(j\in\{1,\dots,p\}\)) that should be optimized simultaneously. Without loss of generality, each objective should be minimized. Let \(Y=f(X)\subset\R^p\) denote the set of all \emph{outcome vectors} in the objective space. Note that there doesn't exist a canonical order of the elements in \(\R^p\), for \(p\geq 2\). Accordingly, an ordering has to be specified to compare the feasible points in the objective space and identify the optimal ones. The goal is to compute the \(C\)-non-dominated set defined by ordering cone \(C\). The corresponding points in the decision space are called \emph{\(C\)-efficient}.
\end{problem}

\begin{figure}[htbp]
    \centering
    \begin{minipage}[t]{0.45\linewidth}
        \centering
        \resizebox{\linewidth}{!}{
            \begin{tikzpicture}
                \fill[mycolor!40] (0,0) -- (2,0) -- (2,2) -- (0,2) -- (0,0);
                \draw[->] (-2,0) -- (2,0);
                \draw[->] (0,-0.5) -- (0,2);
                \node[mycolor!70!black] at (1,1) {\(P\)};
            \end{tikzpicture}
        } \caption{Pareto cone} \label{fig:ParetoCone}
    \end{minipage}
    \hfill
    \begin{minipage}[t]{0.45\linewidth}
        \centering
        \resizebox{\linewidth}{!}{
            \begin{tikzpicture}
                \fill[mycolor!40] (0,0) -- (2,0) -- (2,2) -- (-2,2) -- (0,0);
                \draw[->] (-2,0) -- (2,0);
                \draw[->] (0,-0.5) -- (0,2);
                \node[mycolor!70!black] at (1,1) {\(C_{o}\)};
            \end{tikzpicture}
        }  \caption{Ordinal cone} \label{fig:OrdinalCone}
    \end{minipage}
\end{figure} 
The most commonly used ordering cone in multi-objective optimization is the \emph{Pareto cone} given by 
\begin{align*}
    P\coloneqq\R^p_{\geq}=
    \bigl\{y\in\R^p \colon y\geqq0 \bigr\},
\end{align*}
which represents the componentwise order, see Figure~\ref{fig:ParetoCone}. For the Pareto cone \(P\) holds:

\begin{equation*}
    P=\hcone
    \begin{tikzpicture}[baseline={($(current bounding box.center)- (0,.8ex)$)}]
        \matrix (m) [matrix of math nodes,nodes in empty cells,right delimiter={)},left delimiter={(},inner sep=-2pt,nodes={inner sep=1.2ex}]{
        1 & & 0\\
        & &\\
        0&  & 1 \\
        } ;
        \draw[dotted,thick] (m-1-1)-- (m-3-3);
    \end{tikzpicture}
    = \vcone
    \begin{tikzpicture}[baseline={($(current bounding box.center)- (0,.8ex)$)}]
        \matrix (m) [matrix of math nodes,nodes in empty cells,right delimiter={)},left delimiter={(},inner sep=-2pt,nodes={inner sep=1.2ex}]{
        1 & & 0\\
        & &\\
        0&  & 1 \\
        } ;
        \draw[dotted,thick] (m-1-1)-- (m-3-3);
    \end{tikzpicture}.
\end{equation*}

A feasible solution \(x\) is called \emph{Pareto-optimal} or \emph{efficient} if the image point \(y=f(x)\in N(Y,P)\) is \(P\)-non-dominated. We will write in the following \(y\preceqq \bar{y}\), for the Pareto order \(y \preceqq_{\!\! P} \, \bar{y}\). Additionally a multi-objective optimization problem without specified ordering cone refers to a Pareto optimization problem denoted as MOP. The majority of the literature on multi-objective optimization addresses Pareto optimization problems. Consequently, this specific case has been studied much more thoroughly. The following result establishes a transformation between the non-dominated sets of \ref{eq:MOP_C}---a multi-objective optimization problem with polyhedral ordering cone \(C\)---and an associated Pareto optimization problem (MOP).

\begin{theorem}[see, e.g.,~\citealt{sawaragi85theory,engau-2007}] \label{thm:trans}
    Let \(Y\subset\R^p\) be a nonempty set and let \(C=\hcone(A_C)\) be a cone induced by matrix \(A_C\in\R^{m\times p}\). Then it holds
    \begin{align*}
        A_C\cdot N(Y,\hcone(A_C))\subseteq N(A_C\cdot Y,P).
    \end{align*}
    If the matrix \(A_C\) has full column rank, \(\mathrm{rank}(A_C)=p\), then both sets are equal, i.\,e., \(A_C\cdot N(Y,\hcone(A_C))=N(A_C\cdot Y,P)\).
\end{theorem}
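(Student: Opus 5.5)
The plan is to argue both parts directly from the definitions, using only that \(y^2-y^1\in C=\hcone(A_C)\) is equivalent to \(A_C(y^2-y^1)\geqq 0\), i.\,e., to \(A_C y^2 - A_C y^1\in P\). So membership in \(C\) in the original space corresponds exactly to membership in \(P\) in the image space \(A_C\cdot Y\). Both parts then reduce to comparing \(C\setminus\{0\}\) with \(P\setminus\{0\}\) under the linear map \(y\mapsto A_C y\).

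For the inclusion, I would take \(y\in N(Y,C)\) and suppose, for contradiction, that \(A_C y\notin N(A_C\cdot Y,P)\). Then there is \(y'\in Y\) with \(A_C y'\in A_C y - P\setminus\{0\}\). Hence \(A_C(y-y')\geqq 0\), which gives \(y-y'\in C\). Moreover, \(A_C(y-y')\neq 0\) forces \(y-y'\neq 0\). Thus \(y'\in (y-C\setminus\{0\})\cap Y\), contradicting \(y\in N(Y,C)\). Since \(A_C y\in A_C\cdot Y\) holds trivially, this gives \(A_C\cdot N(Y,C)\subseteq N(A_C\cdot Y,P)\).

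For the reverse inclusion under \(\rank(A_C)=p\), I would take \(z\in N(A_C\cdot Y,P)\) and choose \(y\in Y\) with \(z=A_C y\). The aim is to show \(y\in N(Y,C)\). Suppose some \(y'\in Y\) satisfies \(y-y'\in C\setminus\{0\}\). Then \(A_C(y-y')\geqq 0\). Full column rank means the map \(y\mapsto A_C y\) is injective, so \(y-y'\neq 0\) gives \(A_C(y-y')\neq 0\). Therefore \(A_C y'\in z-P\setminus\{0\}\) with \(A_C y'\in A_C\cdot Y\), contradicting \(z\in N(A_C\cdot Y,P)\). Hence \(z=A_C y\in A_C\cdot N(Y,C)\).

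The only delicate point is the nonzero-ness of the difference, and this is exactly where the two directions differ. In the first part, a nonzero image difference trivially yields a nonzero preimage difference. In the second part, we must go the other way, and this is precisely where injectivity, i.\,e., full column rank, is needed. Without it, \(y-y'\) could be a nonzero element of \(\ker A_C\subseteq C\), and such a \(y'\) would be invisible in the image space. As a side remark, full column rank also makes \(C\) pointed: if \(c\) and \(-c\) are both in \(C\), then \(A_C c=0\), so \(c=0\).
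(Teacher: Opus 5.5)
Your proof is correct. Both inclusions reduce to the equivalence \(y-y'\in C \iff A_C y - A_C y'\in P\), and you correctly locate the only place where \(\rank(A_C)=p\) is needed: transferring \(y\neq y'\) to \(A_C y\neq A_C y'\). Your kernel remark shows the equality genuinely fails without it; for instance, for \(Y=\{y,y+k\}\) with \(0\neq k\in\ker A_C\) one gets \(N(Y,C)=\emptyset\) but \(N(A_C\cdot Y,P)=\{A_C y\}\).

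The paper gives no proof of its own here and simply cites the result from the literature, so there is no paper argument to compare against. Your direct definitional argument is the standard verification of this result. It also never uses pointedness of \(C\) in the first part, which is appropriate, since that inclusion is claimed for arbitrary \(A_C\).
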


Theorem~\ref{thm:trans} implies that any multi-objective optimization problem \ref{eq:MOP_C} with polyhedral ordering cone \(C\) and \(\mathrm{rank}(A_C)=p\) can be reformulated into a multi-objective optimization problem with respect to the Pareto cone \(P\subset\R^k\) with \(k\geq p\). The reformulated problem is given by:
\begin{align*}
    \mathrm{min}\quad  &A_C\cdot (f_1(x),\dots,f_p(x))^\top\\
    \mathrm{s.t.}\quad  &x\in X 
\end{align*}
A polyhedral cone \(C=\hcone(A_C)\) is pointed if and only if the matrix \(A_C\) has full column rank, i.\,e., \(\rank(A_C)=p\). 

\begin{example}[cf.~\citealt{klamroth2023reformulation}]
    We consider ordinal optimization problems (OOPs); ordinal objective functions are used in combinatorial optimization when the elements of a solution cannot be described by additive numerical values but are instead assigned to \(p\) ordered categories. These problems can be formulated as \ref{eq:MOP_C} with the ordinal cone \(C_o\) as the ordering cone, which is illustrated in Figure~\ref{fig:OrdinalCone} for the bi-objective case. The ordinal cone is a polyhedral ordering cone and is spanned by \(p\) linearly independent extreme rays. In particular, \(C_o=\hcone(A_o)\) with \(A_o\in\R^{p\times p}\) given by 
    \begin{equation*}
        A_o = \begin{tikzpicture}[baseline={($(current bounding box.center)- (0,.8ex)$)}]
        \matrix (m) [matrix of math nodes,nodes in empty cells,right delimiter={)},left delimiter={(},inner sep=-2pt,nodes={inner sep=1.2ex}]{
        1 & & 1\\
        & &\\
        0&  & 1 \\
        } ;
        \draw[dotted,thick] (m-1-1)-- (m-3-3);
        \draw[dotted,thick] (m-1-1)-- (m-1-3);
        \draw[dotted,thick] (m-1-3)-- (m-3-3);
    \end{tikzpicture}. 
    \end{equation*} 
    Since \(\rank(A_o)=p\), we can transform the ordinal optimization problem
    \begin{align*}
        \textstyle\min_{C_o} \quad  &f(x)= \big(f_1(x),\dots,f_p(x)\big)^\top \tag{$\text{OOP}$}\label{eq:OOP}\\
        \mathrm{s.t.}\quad  &x\in X 
    \end{align*}
    into the following Pareto optimization problem (MOP):
    \begin{align*}
        \textstyle\min \quad  &A_o\cdot c(x)= \Big(\sum_{i=1}^p c_i(x), \;\sum_{i=2}^p c_i(x),\;\ldots,\;c_p(x)\Big)^\top\\
        \mathrm{s.t.}\quad  &x\in X 
    \end{align*}
    A self-contained introduction to ordinal cones and ordinal optimization can be found in \cite{schaefer2019shortest}, \cite{klamroth2023reformulation}, \cite{klamroth2022matroid} and \cite{figueira2025computational}.
\end{example}
With the reformulation of a \ref{eq:MOP_C} in terms of a MOP the non-dominated set can be determined by applying the existing solution techniques for MOP on the reformulated problem and afterwards transforming the non-dominated set back into the original space by using \(B_C\coloneqq A^{-1}_C\), the inverse of the transformation matrix. This gives us a method for solving \ref{eq:MOP_C} with polyhedral ordering cones.

In the following, we will write short \(Y_N\) for all \(C\)-non-dominated points of a \ref{eq:MOP_C} and \(Y_{wN}\) for all weakly \(C\)-non-dominated points (\(Y_N\subseteq Y_{wN}\subseteq Y\)).

\section{Representation Problems} \label{sec:Representations}

In the previous Chapter, we presented a method for determining the non-dominated set of a multi-objective optimization problem with a polyhedral ordering cone (\ref{eq:MOP_C}). However, providing the decision maker with the entire set of non-dominated points is often impractical, as its cardinality can grow exponentially with the input size for discrete optimization problems and can even be infinite for continuous optimization problems. A suitably chosen subset that adequately represents the non-dominated set is often preferred, in particular when human decision makers are involved. In the following, we investigate the subset selection problem to determine a suitable representation, i.\,e., we assume that the non-dominated set \(Y_N\) is known \emph{a priori}. 
Our goal is to compute a \emph{representation} \(R\subseteq Y_N\) that best approximates the non-dominated set. This raises the question of what constitutes a good representation. Which properties should a subset of the non-dominated set satisfy in order to adequately reflect the structure of the non-dominated set? Although there doesn't exist a universally accepted definition in the operations research literature, most quality indicators are based on the following principles introduced in \cite{faulkenberg2009representations} and \cite{sayn2000representation}:
\begin{description}
    \item[Cardinality] The number of selected points should be minimized or alternatively bounded by a manageable number \(k\in\mathbb{Z}_{>}\), to ensure tractability.
    \item[Coverage] Every point in the non-dominated set should be adequately represented by at least one point in the representation.
    \item[Spacing] The points in the representation should be well distributed to avoid redundancy and ensure diversity. 
\end{description}

These three criteria are generally conflicting and should be balanced appropriately \citep[cf.][]{vaz15representation}. If there are not any bounds on the three indicators specified \emph{a priori}, each of them can be treated as an objective function, making the representation problem itself a three-objective optimization problem. Coverage and Spacing were first defined based on distance measures. Later, \cite{humbertropers25representation} generalized this approach by using a valued binary \emph{representation relation} \(\mathcal{P}\), which specifies for each pair \(y,\bar{y}\in Y\) the \emph{degree of acceptability} \(\mathcal{P}_{y\bar{y}}\) of the assertion ``\(y\) represents \(\bar{y}\)''. Here, a value of \(0\) denotes total acceptability, and as the degree increases, this representation becomes less accurate. Note that representation relations should be reflexive, i.e., \(\mathcal{P}_{yy}=0, \, \forall y\in Y\). This ensures that each point can represent itself. The general form for \emph{representation problems} is given by:

\begin{align*}
    \min \quad&|R| \\
    \min \quad&\mathrm{Coverage}(R,Y_N) \coloneqq \max_{y\in Y_N}\min_{r\in R} \mathcal{P}_{r,y} \label{eq:representationProblem} \tag{RP}\\
    \max \quad&\mathrm{Spacing}(R)\coloneqq\min_{\substack{r, \bar{r} \in R \\ r \neq \bar{r}}}\mathcal{P}_{r,\bar{r}}\\
    \mathrm{s. \,t.} \quad &R\subseteq Y_N,
\end{align*}

The coverage value for a given non-dominated set~\(Y_N\) and a given representation~\(R\) is determined by assigning to each point~\(y\) of the non-dominated set the point~\(r\) in the representation which represents it best, i.\,e., for which the degree of acceptability for \(r\) representing \(y\) is minimal. The coverage value is then determined by the worst represented point \(y\in Y_N\), the point for which the best representing point in \(R\) has the highest degree of acceptability. The spacing value for a given representation is defined by the minimal degree of acceptability for one point in the representation representing another point in the representation. 

Instead of considering the three-objective representation problem, it is often more reasonable to transform one or two objectives into constraints. This is also described in \cite{humbertropers25representation}, where an upper bound on the cardinality is fixed, and then coverage and spacing are optimized lexicographically, i.\,e., the spacing is optimized over all representations, which minimize the coverage value. However, we will continue to consider the representation problem from a three-criterion perspective as the described constrained versions coincide with the corresponding \(\varepsilon\)-constraint scalarizations.

The definitions for coverage and spacing depend on the considered representation relation and the corresponding degree of acceptability. In the following, we investigate different representation relations for MOP and examine which of them can be extended to the case of \ref{eq:MOP_C} with polyhedral ordering cones. A straightforward approach is to transform the \ref{eq:MOP_C} into an MOP instance using the transformation matrix \(A_C\), that induces the ordering cone \(C\), i.\,e., \(C=\hcone(A_C)\), and solve the transformed problem in order to obtain the non-dominated set \((A_C\cdot Y)_N\) as discussed in the previous chapter. Subsequently, the representation problem~\ref{eq:representationProblem} is solved based on \((A_C\cdot Y)_N\), yielding a representation \(A_C\cdot R\) for the corresponding Pareto optimization problem.

The \ref{eq:representationProblem} transformed in the Pareto structure is given by:

\begin{align*}
    \min \quad&|A_C\cdot R| \\
    \min \quad&\max_{y\in (A_C\cdot Y)_N}\min_{r\in A_C\cdot R} \mathcal{P}_{r,y} \\
    \max \quad&\min_{\substack{r, \bar{r} \in A_C\cdot R \\ r \neq \bar{r}}}\mathcal{P}_{r,\bar{r}} \tag{RP$_P$}\label{eq:RPp}\\
    \mathrm{s. \,t.} \quad &A_C\cdot R\subseteq (A_C\cdot Y)_N
\end{align*} 

The obtained optimal representation for \ref{eq:RPp} can be mapped back on the original optimization problem with polyhedral ordering cone using the matrix \(B_C\): \(B_C\cdot A_C\cdot R=R\), which constitutes a subset of the non-dominated set for the original problem. However, it remains to be investigated whether this is an optimal representation for the optimization problem with a general polyhedral ordering cone.

We will analyze for which representation relations the problems \ref{eq:RPp} of the corresponding Pareto optimization problem and the representation problem \ref{eq:representationProblem} of the original problem are equivalent.  If both problems are equivalent, existing algorithms for \ref{eq:RPp} can be applied to the transformed problem, and the resulting representation can be mapped back. Otherwise, alternative approaches must be developed to address the problem directly on the non-dominated set~\(Y_N\) of \ref{eq:MOP_C}. 

In the following, we consider various concepts for representation relations, each of which may yield different optimal representations for the same problem. Two of them are dominance-based approaches, relying on \(\varepsilon\)-dominance and \(\gamma\)-dominance, while the others are distance-based representation relations. Each of these concepts has its own advantages and limitations in terms of interpretability, applicability, and computational tractability.

\subsection{Distance-based Representation Relations} \label{sec:Distances}

The first representation relation is based on a metric in the objective space. For a given metric \(d\colon \R^p \times \R^p \rightarrow \R\), the \emph{distance-based degree of acceptability} of a point \(y\in\R^p\) representing another point \(\bar{y}\in\R^p\) is \( \delta_{y,\bar{y}}\coloneqq d(y,\bar{y})\).
Accordingly, the distance-coverage value of a give representation \(R\) for a non-dominated set \(Y_N\) is the maximal distance from a point of the non-dominated set to its closest point in the representation, i.\,e.,
\begin{align*}
    \max_{y\in Y_N} \min_{r\in R} \delta_{y,r}.
\end{align*}
The spacing value of a representation \(R\subset Y_N\) is the minimal distance between any two points in \(R\), i.\,e., 
\begin{align*}
    \min_{\substack{r, \bar{r} \in R \\ r \neq \bar{r}}} \delta_{r,\bar{r}}.
\end{align*}
Incorporating the distance-based degree of acceptability into the \ref{eq:representationProblem} yields the distance-based representation problem. 
In the following we are investigating whether the representation problem can be solved equivalently for the transformed Pareto problem. In general, by linearly transforming the points in \(Y_N\) with the matrix \(A_C\), the relative distances between the points change (as \(A_C\) is not necessarily a length-preserving transformation). As a result, points that are close together in the original problem may be farther apart after the transformation to the Pareto problem. Consequently, when solving a \ref{eq:MOP_C} via reformulation in the Pareto structure, the non-dominated set must be transformed back, as the representation problem \eqref{eq:RPp} on the non-dominated set of the associated Pareto problem \(A_C\, Y_N\) and the representation problem \eqref{eq:representationProblem} on the original non-dominated set \(Y_N\) are not equivalent.

Up to now, we chose the metric in the representation relation independent of the respective ordering cone. This raises the question: How is the distance measure comply with different ordering cones, or in other words, is it appropriate to use the same distance measure for all ordering cones? Alternatively, we can define a cone-dependent distance measure:

\begin{definition}
    Let \(d\colon \R^p\times\R^p \rightarrow \R\) be a metric, and \(C\subseteq\R^p\) be a polyhedral cone, with \(C=\hcone(A_C)\) and \(A_C\in\R^{m\times p}\) being a row-normalized matrix. Then, we define the \emph{cone-induced distance} $d^C(y,\bar{y})$ between two points \(y,\bar{y}\in \R^p\) as:
    \begin{align*}
        d^C(y,\bar{y})\coloneqq d(A_C\cdot y,A_C\cdot \bar{y}).
    \end{align*}
\end{definition} 

The cone-induced distance measures the distance between the transformed points (in the associated Pareto problem). We use row-normalized matrices to reduce ambiguity as infinitely many matrices induce the same cone. By row-normalizing the matrix \(A_C\), we describe the cone \(\hcone(A_C)\) by its normalized normal vectors. Note that the matrix \(A_C\) is not uniquely determined, as any permutation of its rows generates the same \(\hcone(A_C)\). However, the cone-induced distance is invariant with respect to it. 

For example, the ordinal cone can be written as follows: 

\begin{equation*}
C_o = \hcone
\vcenter{\hbox{\begin{tikzpicture}[baseline={($(current bounding box.center)- (0,.8ex)$)}]
    \matrix (m) [matrix of math nodes,nodes in empty cells,right delimiter={)},left delimiter={(},inner sep=-2pt,nodes={inner sep=1.2ex}]{
    1 & & 1\\
    & &\\
    0&  & 1 \\
    } ;
    \draw[loosely dotted,thick] (m-1-1)-- (m-3-3);
    \draw[loosely dotted,thick] (m-1-1)-- (m-1-3);
    \draw[loosely dotted,thick] (m-1-3)-- (m-3-3);
\end{tikzpicture}}}
= \hcone
\vcenter{\hbox{\begin{tikzpicture}
    \matrix (n) [
        matrix of math nodes,
        nodes in empty cells,
        right delimiter={)},
        left delimiter={(},
        inner sep=-2pt,
        nodes={inner sep=1ex}
    ]{
        \frac{1}{\sqrt{p}} & & & \frac{1}{\sqrt{p}} & \frac{1}{\sqrt{p}}\\
         & \frac{1}{\sqrt{p-1}} & & \frac{1}{\sqrt{p-1}}& \frac{1}{\sqrt{p-1}}\\
        & & & & \\
        & & & \frac{1}{\sqrt{2}} & \frac{1}{\sqrt{2}}\\
        0 & & & & 1\\
    };

    \draw[loosely dotted,thick] (n-1-1)--(n-1-4);
    \draw[loosely dotted,thick] (n-2-2)--(n-4-4);
    \draw[loosely dotted,thick] (n-2-2)--(n-2-4);
    \draw[loosely dotted,thick] (n-2-4)--(n-4-4);
    \draw[loosely dotted,thick] (n-2-5)--(n-4-5);
\end{tikzpicture}}}
\end{equation*}
The second matrix is row-normalized and therefore used to define the cone-induced distance w.\,r.\,t.\ the ordinal cone. 

The corresponding Euclidean ordinal-distance is given by
\begin{align*}
    d_2^{C_o}(y,\bar{y})&=d_2\left(\begin{pmatrix}
        \frac{1}{\sqrt{p}}\sum_{i=1}^p y_i\\
        \frac{1}{\sqrt{p-1}}\sum_{i=2}^p y_i\\
        \vdots \\
        \frac{1}{\sqrt{2}}\sum_{i=p-1}^p y_i\\
        y_p
    \end{pmatrix}, \begin{pmatrix}
        \frac{1}{\sqrt{p}}\sum_{i=1}^p \bar{y}_i\\
        \frac{1}{\sqrt{p-1}}\sum_{i=2}^p \bar{y}_i\\
        \vdots \\
        \frac{1}{\sqrt{2}}\sum_{i=p-1}^p \bar{y}_i\\
        \bar{y}_p
    \end{pmatrix}\right)\\[2ex]
&=\sqrt{\sum_{k=1}^p\frac{1}{p+1-k}\Bigl(\sum_{i=k}^p\left(\bar{y}_i-y_i\right)\Bigr)^2}
\end{align*}
and the Manhattan ordinal-distance is given by
\begin{align*}
   d^{C_o}_1(y,\bar{y})\coloneqq \sum_{k=1}^p\frac{1}{\sqrt{p+1-k}} \,\Bigl|\sum_{i=k}^p \bar{y}_i-y_i\Bigr|.
\end{align*} 

\begin{figure}[htbp]
    \centering
    \begin{minipage}[t]{0.49\linewidth}
        \centering
        \begin{subfigure}[t]{0.48\linewidth}
            \centering
            \resizebox{\linewidth}{!}{
                \begin{tikzpicture}
                    \fill[mycolor!40] (0,0) -- (2,0) -- (2,2) -- (0,2) -- (0,0);
                    \node[mycolor!70!black] at (1.4,1.7) {$y+P$};
                    \fill[mycolor!40] (0,0) -- (-2,0) -- (-2,-2) -- (0,-2) -- (0,0);
                    \node[mycolor!70!black] at (-1.4,-1.7) {$y-P$};
                    \filldraw (0,0) circle (2pt) node[above right] {$y$};
                    \draw[] (0,0) circle (1);
                \end{tikzpicture}
            }
            \caption{Pareto}
        \end{subfigure}
        \hfill
        \begin{subfigure}[t]{0.48\linewidth}
            \centering
            \resizebox{\linewidth}{!}{
                \begin{tikzpicture}
                    \fill[mycolor!40] (0,0) -- (2,0) -- (2,2) -- (-2,2) -- (0,0);
                    \node[mycolor!70!black] at (1.3,1.7) {$y+C_o$};
                    \fill[mycolor!40] (0,0) -- (-2,0) -- (-2,-2) -- (2,-2) -- (0,0);
                    \node[mycolor!70!black] at (-1.3,-1.7) {$y-C_o$};
                    \draw[domain=0:360, samples=200]
                        plot ({sqrt(2)*cos(\x) - sin(\x)}, {sin(\x)});
                    \filldraw (0,0) circle (2pt) node[above right] {$y$};
                \end{tikzpicture}
            }
            \caption{Ordinal}
        \end{subfigure}

        \medskip
        Euclidean
    \end{minipage}
    \hfill
    \begin{minipage}[t]{0.49\linewidth}
        \centering
        \begin{subfigure}[t]{0.48\linewidth}
            \centering
            \resizebox{\linewidth}{!}{
                \begin{tikzpicture}
                    \fill[mycolor!40] (0,0) -- (2,0) -- (2,2) -- (0,2) -- (0,0);
                    \node[mycolor!70!black] at (1.4,1.7) {$y+P$};
                    \fill[mycolor!40] (0,0) -- (-2,0) -- (-2,-2) -- (0,-2) -- (0,0);
                    \node[mycolor!70!black] at (-1.4,-1.7) {$y-P$};
                    \filldraw (0,0) circle (2pt) node[above right] {$y$};
                    \draw[] (1,0) -- (0,1) -- (-1,0) -- (0,-1) -- (1,0);
                \end{tikzpicture}
            }
            \caption{Pareto}
        \end{subfigure}
        \hfill
        \begin{subfigure}[t]{0.48\linewidth}
            \centering
            \resizebox{\linewidth}{!}{
                \begin{tikzpicture}
                    \fill[mycolor!40] (0,0) -- (2,0) -- (2,2) -- (-2,2) -- (0,0);
                    \node[mycolor!70!black] at (1.3,1.7) {$y+C_o$};
                    \fill[mycolor!40] (0,0) -- (-2,0) -- (-2,-2) -- (2,-2) -- (0,0);
                    \node[mycolor!70!black] at (-1.3,-1.7) {$y-C_o$};
                    \filldraw (0,0) circle (2pt) node[above right] {$y$};
                    \draw[] ({sqrt(2)},0) -- (-1,1) -- (-{sqrt(2)},0) -- (1,-1) -- ({sqrt(2)},0);
                \end{tikzpicture}
            }
            \caption{Ordinal}
        \end{subfigure}

        \medskip
        Manhattan
    \end{minipage}

    \caption{Unit circle for different cone-induced distances}
    \label{fig:unitCircles}
\end{figure} 
Figure~\ref{fig:unitCircles} illustrates the unit circle for different cone-induced distances in the bi-objective case. The point \(y\) dominates all points in \(y+C\) and all points in \(y-C\) dominate \(y\). Accordingly, if \(y\) is a non-dominated point other non-dominated points can only be found in the white space, i.\,e., in \(\R^p\setminus((y+C)\cup(y-C))\). For the Pareto case, the unit circle will not change when considering the cone-induced distance instead of the original distance. The cone-induced distance, however, incorporates the facets of the ordering cone and such the preference information into the distance metric. Thus, it is more reasonable in order to determine a suitable representation, than using a metric independent from the ordering cone.

With this definition, the distance-based degree of acceptability of a point \(y\) representing another point \(\bar{y}\), where \(y,\bar{y}\in\R^p\), with respect to an ordering cone \(C\), becomes
\begin{align*}
    \delta^C_{y,\bar{y}}\coloneqq d^C(y,\bar{y}).
\end{align*}

From the definition of the cone-induced distance it follows directly that \[\delta_{y,\bar{y}}^C=\delta_{A_C\cdot y,A_C\cdot \bar{y}}^P.\] Therefore, for a given polyhedral ordering cone \(C\) with \(A_C\) being a row-normalized matrix that induces the cone, a non-dominated set \(Y_N\) and a representation \(R\subseteq Y_N\) the following equalities hold:
\begin{align*}
    \max_{y\in Y_N} \min_{r\in R} \delta^C_{y,r}&=
    \max_{y\in (A_C\cdot Y)_N} \min_{r\in A_C\cdot R} \delta^P_{y,r}, \\
    \min_{r,\bar{r}\in R} \delta^C_{r,\bar{r}} &= \min_{r,\bar{r}\in A_C\cdot R} \delta^P_{r,\bar{r}}. 
\end{align*}
Accordingly, the representation problem with cone-induced distances can be equivalently formulated and solved for the corresponding  Pareto optimization problem. 

\subsection{Representation Relation Based on \(\varepsilon\)-Dominance} \label{sec:Epsilon}

The following two representation relations are dominance-based, meaning that a point \(y\) can represent another point \(\bar{y}\), if \(y\) ``dominates'' \(\bar{y}\). Therefore, a new dominance concept is required that relaxes cone-dominance. The \(\varepsilon\)-dominance representation achieves this by introducing a multiplicative tolerance parameter \(\varepsilon\geq0\). If we consider the Pareto cone as the ordering cone, we get the following definition:
\begin{definition}[cf.~\citealt{humbertropers25representation}] \label{def:eps-dom-pareto}
    Let \(\varepsilon\geq 0\) and \(y,\bar{y}\in \R_{\geq}^p\). The point \(y\) is said to \(\varepsilon\)-dominate \(\bar{y}\) (w.\,r.\,t.\ the Pareto cone \(P\subset\R^p\)), denoted by \(y\epreceqq \,\bar{y}\) if
    \begin{align*}
        y_i \leq (1+\varepsilon) \, \bar{y}_i \quad \forall i\in\{1,\dots,p\}.
    \end{align*}
\end{definition}
In other words, \(y\) \(\varepsilon\)-dominates \(\bar{y}\) if \(y\) is at least as good as \(\bar{y}\) within a factor of \((1+\varepsilon)\) in every objective. Equivalently, this can be interpreted geometrically by rewriting the inequality into \(\frac{y_i}{1+\varepsilon}\leq \bar{y}_i, \ \forall i\in\{1,\dots,p\}\). The point \(y\in Y_N\) is scaled towards the origin by the factor \(\frac{1}{1+\varepsilon}< 1\) (for \(\varepsilon>0\)), i.\,e., it is shifted along the line \(\{t\,y\colon t\in[0,1]\}\) connecting \(y\) with the origin. Therefore, \(y\) \(\varepsilon\)-dominates every point in \(\frac{y}{1+\varepsilon}+P\). 
Figure~\ref{fig:eps-dom-pareto} illustrates the \(\varepsilon\)-dominance for the Pareto cone in the bi-objective case. Note that for \(\varepsilon=0\), \(\varepsilon\)-dominance coincides with Pareto-dominance, except that a point \(\varepsilon\)-dominates itself. The binary relation associated with the \(\varepsilon\)-dominance for \(\varepsilon>0\) is reflexive and transitive, but neither asymmetric nor antisymmetric, as there are points \(y^1,y^2\in\R^p_\geq\) such that \(y^1 \epreceqq \, y^2\) and \(y^2 \epreceqq \, y^1\).
 
\begin{figure}[htbp]
    \centering
    \begin{subfigure}[t]{0.45\linewidth}
        \centering
        \resizebox{\linewidth}{!}{
            \begin{tikzpicture}
                \fill[mycolor!20] (1,1) -- (4,1) -- (4,4) -- (1,4) -- (1,1);
                \fill[mycolor!40] (2,2) -- (4,2) -- (4,4) -- (2,4) -- (2,2);
                \node[mycolor!70!black] at (3.45,2.26) {\small\(y+P\)};
                \node[mycolor!70!black] at (3.3,1.26) {\small\(\frac{y}{1+\varepsilon}+P\)};
                \draw[dashed, gray] (0,0) -- (2,2);
                \draw[->] (-0.5,0) -- (4,0) node[right] {\(f_1\)};
                \draw[->] (0,-0.5) -- (0,4) node[above] {\(f_2\)};
                \filldraw[] (2,2) circle (1pt) node[below ] {\(y\)};
                \filldraw[black!80] (1,1) circle (1pt) node[below] {\(\frac{y}{1+\varepsilon}\)};
            \end{tikzpicture}
        }
        \caption{Pareto}
        \label{fig:eps-dom-pareto}
    \end{subfigure}
    \hfill
    \begin{subfigure}[t]{0.45\linewidth}
        \centering
        \resizebox{\linewidth}{!}{
            \begin{tikzpicture} []
                \fill[mycolor!20] (1,1) -- (4,1) -- (4,4) -- (-0.5,4) -- (-0.5,2.5) -- (1,1);
                \fill[mycolor!40] (2,2) -- (4,2) -- (4,4) -- (0,4) -- (2,2);
                \node[mycolor!70!black] at (3.45,2.26) {\small\(y+C_o\)};
                \node[mycolor!70!black] at (3.3,1.26) {\small\(\frac{y}{1+\varepsilon}+C_o\)};
                \draw[dashed, gray] (0,0) -- (2,2);
                \draw[->] (-0.5,0) -- (4,0) node[right] {\(f_1\)};
                \draw[->] (0,-0.5) -- (0,4) node[above] {\(f_2\)};
                \filldraw[] (2,2) circle (1pt) node[below] {\(y\)};
                \filldraw[black!80] (1,1) circle (1pt) node[below] { \(\frac{y}{1+\varepsilon}\)};
            \end{tikzpicture}
        }
        \caption{Ordinal}
        \label{fig:eps-dom-ordinal}
    \end{subfigure}
    \caption{\(\varepsilon\)-dominated for the point \(y\) w.\,r.\,t.\ the Pareto and the ordinal cone.}
    \label{fig:eps-dom}
\end{figure} 
\begin{cor} \label{cor:eps-dom-pareto}
    Let \(\varepsilon\geq 0\) and \(y,\bar{y}\in\R_{\geq}^p\). If \(y\preceqq \bar{y}\), then \(y\epreceqq \,\bar{y}\). 
\end{cor}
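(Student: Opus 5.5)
The plan is to unfold both definitions and compare them coordinatewise. First I need to fix what \(y\preceqq\bar{y}\) means. The paper's definition of weak \(C\)-dominance is \(y\in\bar{y}-\interior(C)\); for \(C=P\) this says \(y_i<\bar{y}_i\) for all \(i\). The usual Pareto reading is the componentwise order \(y\leqq\bar{y}\), i.\,e., \(\bar{y}-y\in P\). I will use the weaker hypothesis \(y_i\leq\bar{y}_i\) for all \(i\in\{1,\dots,p\}\). Both the strict (interior) version and the \(\bar y - P\setminus\{0\}\) version imply it, so the argument covers every reading at once.

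The key step is a one-line inequality chain in each coordinate. Fix \(i\in\{1,\dots,p\}\). By hypothesis \(y_i\leq\bar{y}_i\). Since \(\bar{y}\in\R^p_{\geq}\), we have \(\bar{y}_i\geq 0\), and since \(\varepsilon\geq0\), we get \(\varepsilon\,\bar{y}_i\geq 0\). Hence
\begin{align*}
y_i\leq\bar{y}_i\leq\bar{y}_i+\varepsilon\,\bar{y}_i=(1+\varepsilon)\,\bar{y}_i .
\end{align*}
This holds for every \(i\), which is exactly the condition of Definition~\ref{def:eps-dom-pareto}, so \(y\epreceqq\,\bar{y}\).

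Geometrically this is the inclusion \(\bar{y}\in y+P\subseteq \frac{y}{1+\varepsilon}+P\). The inclusion holds because \(y-\frac{y}{1+\varepsilon}=\frac{\varepsilon}{1+\varepsilon}\,y\in P\) whenever \(y\in\R^p_{\geq}\), and it matches the shaded regions in Figure~\ref{fig:eps-dom-pareto}.

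There is no real obstacle. The only point needing care is the role of nonnegativity: \(\bar{y}_i\geq0\) is exactly what makes multiplying by \(1+\varepsilon\) enlarge rather than shrink the bound. I will remark that without the assumption \(\bar{y}\in\R^p_{\geq}\) the statement fails, for example with \(y=\bar{y}\) having a negative coordinate and \(\varepsilon>0\).
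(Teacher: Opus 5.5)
Your proof is correct and matches the paper's reasoning: the paper gives no separate proof, treating the corollary as immediate from Definition~\ref{def:eps-dom-pareto} and restating it as $y+P\subseteq\frac{y}{1+\varepsilon}+P$ for $y\in\R^p_{\geq}$ (the case $C=P$ of Lemma~\ref{lem:epsilonEquivalence}), which is exactly your coordinatewise chain together with your geometric remark. Only your closing aside needs tightening: $y\in\R^p_{\geq}$ and $y\leqq\bar y$ already give $\bar y\in\R^p_{\geq}$, so dropping just the hypothesis on $\bar y$ does not break the statement (your example drops both), and $y=\bar y$ is not a dominance pair under the strict readings you said you would cover, so use e.g.\ $\bar y=(-1,\dots,-1)^\top$, $y_i=-1-\frac{\varepsilon}{2}$ with $\varepsilon>0$ instead.
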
 

Corollary~\ref{cor:eps-dom-pareto} shows that Pareto-dominance is stronger than \(\varepsilon\)-dominance on \(\R_{\geq}^p\). Equivalently, for any \(y\in\R_{\geq}^p\), it holds
\begin{align*}
    y+ P\subseteq \frac{y}{1+\varepsilon}+ P \quad \forall \varepsilon\geq0.
\end{align*} If \(y\notin\R_{\geq}^p\), scaling \(y\) by \(\frac{1}{1+\varepsilon}\) yields a point that does not dominate \(y\). In this case, \(\varepsilon\)-dominance isn't constitutes a relaxation of Pareto-dominance and is therefore not suitable for defining a representation relation.

The \(\varepsilon\)-dominance can be expressed equivalently as
\begin{align*}
    y\epreceqq \, \bar{y} &\iff \frac{y_i}{\bar{y}_i}-1\leq \varepsilon  \quad \forall i\in\{1,\ldots,p\}.
\end{align*}
In the Pareto case, the \emph{degree of acceptability based on \(\varepsilon\)-dominance} for \(y\) representing \(\bar{y}\) is defined as the minimal value of \(\varepsilon\) such that \(y\) \(\varepsilon\)-dominates \(\bar{y}\), i.\,e.,
\begin{align*}
    \varepsilon_{y\bar{y}}\coloneqq \max\left\{\max_{i\in\{1,\dots,p\}}\left\{\frac{y_i}{\bar{y}_i}-1\right\},0\right\} = \max\left\{\max_{i\in\{1,\dots,p\}}\left\{\frac{y_i}{\bar{y}_i}\right\}-1,0\right\}.
\end{align*}

The concept of \(\varepsilon\)-dominance is now extended for general ordering cones.

\begin{definition} \label{def:eps-dom}
    Let \(\varepsilon\geq 0\), \(y,\bar{y}\in \R^p\) and \(C\subset\R^p\) be a ordering cone. The point \(y\) is said to \(\varepsilon\)-dominate \(\bar{y}\) w.\,r.\,t.\ the ordering cone \(C\), denoted by 
    \(y\preceqq_{C,\varepsilon}\, \bar{y}\)) if
    \begin{align*}
        y \preceqq_{\!\!C} \, (1+\varepsilon) \, \bar{y}.
    \end{align*}
\end{definition}

In Figure~\ref{fig:eps-dom-ordinal} \(\varepsilon\)-dominance is illustrated for a point in the bi-objective space with the ordinal cone as the ordering cone. Corollary~\ref{cor:eps-dom-pareto} does not extend to general cones. In particular, \(y\preceqq_{\!\!C} \, \bar{y}\) does not necessarily imply \(y\preceqq_{\!\!C,\varepsilon} \, \bar{y}\), as illustrated in Figure~\ref{fig:epsilonLemma}. This raises the question weather or under which assumptions cone-dominance implies the corresponding \(\varepsilon\)-dominance. Equivalently, under which assumptions does \(y+ C\subseteq \frac{y}{1+\varepsilon}+ C\) hold for every \(\varepsilon\geq0\)?

\begin{lemma}\label{lem:epsilonEquivalence}
    Let \(y\in\R^p\) and \(C\subset \R^p\) be a ordering cone. Then the following equivalence holds:
    \begin{align*}
        \forall\varepsilon\geq0: y+ C \subseteq \frac{y}{1+\varepsilon}+ C \iff y\in C
    \end{align*}
\end{lemma}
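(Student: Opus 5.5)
The plan is to reduce the set inclusion to a simple membership condition on the shift vector and then use the cone properties of \(C\) (convexity, closedness under positive scaling, and \(0\in C\) via the zero-dominance convention) to show that this condition is equivalent to \(y\in C\).

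First I will record a general fact. For an ordering cone \(C\) and a vector \(v\in\R^p\),
\[
v+C\subseteq C \iff v\in C .
\]
If \(v+C\subseteq C\), then \(v=v+0\in v+C\subseteq C\). This uses \(0\in C\). If \(C\) were only a cone without the origin, I would instead take the limit \(v+tc\) with \(t\to 0\) and need closedness. Since polyhedral cones and all cones considered here contain \(0\), I will simply note this. Conversely, if \(v\in C\) and \(c\in C\), then \(v+c=2\bigl(\tfrac12 v+\tfrac12 c\bigr)\in C\) by convexity and positive homogeneity, so \(v+C\subseteq C\).

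Next, I rewrite the inclusion in the lemma. Subtracting \(\frac{y}{1+\varepsilon}\) from both sides, the condition \(y+C\subseteq \frac{y}{1+\varepsilon}+C\) is equivalent to
\[
\Bigl(y-\tfrac{y}{1+\varepsilon}\Bigr)+C=\tfrac{\varepsilon}{1+\varepsilon}\,y+C\subseteq C .
\]
By the general fact, this holds if and only if \(\frac{\varepsilon}{1+\varepsilon}y\in C\).

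Now both directions follow. For \(\Leftarrow\), let \(y\in C\). For \(\varepsilon>0\) the factor \(\frac{\varepsilon}{1+\varepsilon}\) is positive, so \(\frac{\varepsilon}{1+\varepsilon}y\in C\) because \(C\) is a cone. For \(\varepsilon=0\) the inclusion \(y+C\subseteq y+C\) is trivial. For \(\Rightarrow\), I only need a single \(\varepsilon\), say \(\varepsilon=1\). It gives \(\frac12 y\in C\), and scaling by \(2\) yields \(y\in C\).

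The only delicate point is whether \(0\in C\). The definition of a cone in the paper only requires \(\lambda C\subset C\) for \(\lambda>0\). Polyhedral cones \(\hcone(A_C)\) contain \(0\), and the ordering relation \(\mathcal{R}_C\) implicitly requires reflexivity, so I will state that assumption explicitly. No other step is difficult.
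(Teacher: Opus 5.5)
Your proof is correct and follows essentially the same route as the paper. The paper first rescales the inclusion to $(1+\varepsilon)y+C\subseteq y+C$ and reduces it to $\varepsilon y\in C$, while you translate directly to $\frac{\varepsilon}{1+\varepsilon}y+C\subseteq C$; both arguments rest on the same fact $v+C\subseteq C\iff v\in C$. Your explicit remark that this needs $0\in C$ is a worthwhile clarification: the paper uses it tacitly, and without it the lemma already fails for $y=0$, e.g.\ for the open positive orthant.
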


\begin{proof}\quad
    \(
    \begin{aligned}[t]
        &\forall\varepsilon\geq0:\, y+ C \subseteq \frac{y}{1+\varepsilon}+ C \\
        &\iff \forall\varepsilon\geq0: (1+\varepsilon)\,y+ C \subseteq y+ C \\
        &\iff  \forall\varepsilon\geq0:(1+\varepsilon)\,y\in y+ C\\
        &\iff \forall\varepsilon\geq0: y+\underbracket[0.5pt]{\varepsilon \,y}_{\in C}\in y+ C \\
        &\iff y\in C 
    \end{aligned}\)\\[-2.5ex]\qedhere
\end{proof}

\begin{figure} [htbp] \centering
\begin{tikzpicture} [scale = 0.8]
    \fill[mycolor!20] (0,0) -- (4,0) -- (4,4) -- (-4,4) --(0,0);
    \draw[->] (-4,0) -- (4,0) node[right] {$f_1$};
    \draw[->] (0,-0.5) -- (0,4) node[above] {\(f_2\)};
    \node[mycolor!70!black] at (3.6,3.6) {\small\(C_o\)};

    \draw[mycolor!70!black] (2,2) -- (3.5,2);
    \draw[mycolor!70!black] (2,2) -- ({2-sqrt(1.125)}, {2+sqrt(1.125)});
    \filldraw[] (2,2) circle (1pt) node[below] {\(y^1\)};
    \fill[pattern=dots, pattern color=mycolor!70!black]({2-sqrt(1.125)}, {2+sqrt(1.125)}) -- (2,2) -- (3.5,2) -- (4,2.5) -- (2.5,2.5) -- ({2.5-sqrt(1.125)}, {2.5+sqrt(1.125)}) -- cycle;
    \draw[mycolor!90!black] (4/3,4/3) -- (3.5,4/3);
    \draw[mycolor!90!black] (4/3,4/3) -- ({4/3-(2+sqrt(1.125)-4/3)},{4/3+(2+sqrt(1.125)-4/3)});
    \fill[pattern=dots, pattern color=mycolor!90!black]({8/3-(2+sqrt(1.125))},{4/3+(2+sqrt(1.125)-4/3)}) -- (4/3,4/3) -- (3.5,4/3) -- (4,4/3+0.5) -- (4/3+0.5,4/3+0.5) -- ({8/3-(2+sqrt(1.125))+0.5},{2.5+sqrt(1.125)}) -- cycle;
\draw[dashed, black!80] (0,0) -- (2,2);
    \filldraw[black!80] (4/3,4/3) circle (1pt) node[below] {\(\frac{y^1}{1+\varepsilon}\)};

    \draw[mycolor!90!black] (-2,4/3) -- (-0.5,4/3);
    \draw[mycolor!90!black] (-2,4/3) -- (-5+4/3,3);
    \fill[pattern=crosshatch dots, pattern color=mycolor!90!black](-5+4/3,3) -- (-2,4/3) -- (-0.5,4/3) -- (0,4/3+0.5) -- (-1.5,4/3+0.5) -- ({-4.5+4/3},{2.5+sqrt(1.125)}) -- cycle;
    \fill[pattern=crosshatch dots, pattern color=mycolor!70!black](-0.5,2) -- (-3,2) -- ({-3-sqrt(1.125)},{2+sqrt(1.125)}) -- ({-2.5-sqrt(1.125)},{2.5+sqrt(1.125)}) -- (-2.5,2.5) -- (0,2.5) -- cycle;
\draw[dashed, black!80] (0,0) -- (-3,2);
    \draw[mycolor!70!black] (-3,2) -- (-0.5,2);
    \draw[mycolor!70!black] (-3,2) -- ({-3-sqrt(1.125)},{2+sqrt(1.125)});
    \filldraw[] (-3,2) circle (1pt) node[below] {\(y^2\)};
    \filldraw[black!80] (-2,4/3) circle (1pt) node[below] {\(\frac{y^2}{1+\varepsilon}\)};

    \filldraw[] (2,2) circle (1pt) node[below] {\(y^1\)};
\end{tikzpicture}
\caption{\(\varepsilon\)-dominance for points inside (\(y^1\)) and outside (\(y^2\)) the ordering cone \label{fig:epsilonLemma}}
\end{figure} 
Lemma~\ref{lem:epsilonEquivalence} shows that the \(\varepsilon\)-dominance is a relaxation of the cone-dominance if and only if the corresponding point \(y\) itself lies in the ordering cone~\(C\). This result is illustrated in Figure~\ref{fig:epsilonLemma}, where the ordinal cone is considered. The point \(y^1\) lies in the cone. Consequently, cone-dominance is stronger than \(\varepsilon\)-dominance for every \(\varepsilon>0\). In contrast, \(y^2\) does not lie in the ordering cone. Hence, cone-dominance is not necessarily implied by \(\varepsilon\)-dominance. Thus, using \(\varepsilon\)-dominance as basis of a representation relation is only meaningful if the non-dominated set of \ref{eq:MOP_C} is contained in the ordering cone, i.\,e., \(Y_N\subset C\). 

For a given ordering cone \(C\) the \emph{\(\varepsilon\)-based degree of acceptability} for \(y\) representing \(\bar{y}\) is defined by
\begin{align*}
    \varepsilon_{y\bar{y}}^C\coloneqq \max\left\{\min\{\varepsilon:y \preceqq_{\!\!C} \, (1+\varepsilon) \, \bar{y}\},0\right\}.
\end{align*}
The value of \(\varepsilon_{y\bar{y}}^C\) corresponds to the smallest value of \(\varepsilon\) such that \(y\) \(\varepsilon\)-dominates \(\bar{y}\) with respect to ordering cone $C$.

This representation relation can be included in the \ref{eq:representationProblem} to specify the \(\varepsilon\)-\ref{eq:representationProblem}. The \(\varepsilon\)-coverage value of a subset \(R\) representing the non-dominated set \(Y_N\) is defined as the smallest \(\varepsilon\) such that every point in \(Y_N\) is \(\varepsilon\)-dominated by at least one point in \(R\). It is computed as
\begin{align*}
    \max_{y\in Y_N} \min_{r\in R} \varepsilon_{ry}^C.
\end{align*}

\begin{definition}[cf.~\citealt{ehrgott-2005}]
    The non-dominated set \(Y_N\) with respect to a ordering cone \(C\) is called \emph{externally stable} if for each \(y\in Y\setminus Y_N\) exists a \(\bar{y}\in Y_N\) such that \(y\in\bar{y}+C\). 
\end{definition}

\begin{lemma} \label{lem:eps-coverage}
    Let \(C\subset\R^p\) be a ordering cone, \(Y\subset C\) be the set of all feasible points of a \ref{eq:MOP_C}, \(Y_N\subseteq Y\) be the non-dominated set w.r.t. the ordering cone \(C\) and \(R\subseteq Y_N\). Let \(Y_N\) be externally stable. Then
    \begin{align*}
        \max_{y\in Y_N} \min_{r\in R} \varepsilon_{ry}^C=\max_{y\in Y} \min_{r\in R} \varepsilon_{ry}^C
    \end{align*}
\end{lemma}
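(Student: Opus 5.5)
Since \(Y_N\subseteq Y\), the inequality \(\max_{y\in Y_N}\min_{r\in R}\varepsilon^C_{ry}\le\max_{y\in Y}\min_{r\in R}\varepsilon^C_{ry}\) is immediate. The whole task is therefore the reverse inequality: we show that every point of \(Y\setminus Y_N\) is covered by \(R\) at least as well as some point of \(Y_N\). Let \(\bar\varepsilon\coloneqq\max_{y\in Y_N}\min_{r\in R}\varepsilon^C_{ry}\) (we interpret \(\max/\min\) as attained, or work with suprema/infima and pass to limits). Fix \(y\in Y\setminus Y_N\). By external stability there is \(\bar y\in Y_N\) with \(y\in\bar y+C\). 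Choose \(r\in R\) with \(\varepsilon^C_{r\bar y}\le\bar\varepsilon\). It then suffices to prove \(\varepsilon^C_{ry}\le\varepsilon^C_{r\bar y}\), i.e.\ that \(r\) \(\varepsilon\)-dominates \(y\) whenever it \(\varepsilon\)-dominates \(\bar y\).

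The key step is a transitivity argument using Lemma~\ref{lem:epsilonEquivalence}. Put \(\varepsilon=\varepsilon^C_{r\bar y}\). Reading the dominance relation in the non-strict sense \(r\in(1+\varepsilon)\bar y-C\) (the sense in which the minimum defining \(\varepsilon^C\) is attained), we have \((1+\varepsilon)\bar y-r\in C\). Next we write
\[(1+\varepsilon)y-r=\bigl((1+\varepsilon)\bar y-r\bigr)+(1+\varepsilon)(y-\bar y).\]
Here \(y-\bar y\in C\), and \(C\) is a convex cone, hence closed under addition and nonnegative scaling. So \((1+\varepsilon)y-r\in C\), i.e.\ \(r\) \(\varepsilon\)-dominates \(y\). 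This gives \(\varepsilon^C_{ry}\le\varepsilon\le\bar\varepsilon\), and maximizing over \(y\) yields the reverse inequality.

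This route does not even need the hypothesis \(Y\subset C\). It is presumably included, following Lemma~\ref{lem:epsilonEquivalence}, so that \(\varepsilon\)-dominance is a genuine relaxation; in particular it makes \(\varepsilon^C_{yy}=0\) and keeps the degrees of acceptability finite and well defined. We can also check the step via Lemma~\ref{lem:epsilonEquivalence}: since \(\bar y\in C\), we get \(y\in\bar y+C\subseteq\frac{\bar y}{1+\varepsilon}+C\).

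The main obstacle is making sure the dominance notion in \(\varepsilon^C\) is the closed one. The symbol \(\preceqq_{\!\!C}\) was formally defined via \(\interior(C)\), for which the minimum need not exist. I would first argue that the minimum in the definition of \(\varepsilon^C_{y\bar y}\) only makes sense for the closed relation \(y\in(1+\varepsilon)\bar y-C\), and use that. Alternatively, the same computation works with \(\interior(C)\): since \(\interior(C)+C\subseteq\interior(C)\), the argument goes through with infima in place of minima.
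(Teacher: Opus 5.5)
Your argument follows the paper's route. External stability supplies $\bar y\in Y_N$ with $y\in\bar y+C$. The paper then simply asserts the monotonicity $\varepsilon^C_{ry}\le\varepsilon^C_{r\bar y}$, after a case distinction on whether some $r\in R$ already dominates $y$ that is unnecessary. Your decomposition $(1+\varepsilon)y-r=\bigl((1+\varepsilon)\bar y-r\bigr)+(1+\varepsilon)(y-\bar y)\in C$ supplies exactly the justification the paper omits.

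One small caveat concerns your aside that $Y\subset C$ is not needed. If the inner minimum in $\varepsilon^C$ may be negative and is then truncated at $0$, the truncated value is feasible for $(r,\bar y)$ only because $\bar y\in C$ makes the feasible set upward closed. The inequality still holds without that hypothesis, since the feasible sets are nested and truncation is monotone, but your step as literally written does use it.
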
 

\begin{proof}
    We show that for every point \(y\in Y\setminus Y_N\), there exists a \(r\in R\) such that \(\varepsilon_{ry}^C\leq\max_{y\in Y_N} \min_{r\in R} \varepsilon_{ry}^C\). Let \(y\in Y\setminus Y_N\). If there is a \(r\in R\) with \(r\preceqq_{\!\!C} \, y\), then \(\varepsilon_{ry}^C=0\). Otherwise, since \(y\notin Y_N\), there exists a \(\bar{y}\in Y_N\) such that \(\bar{y}\preceqq_{\!\!C} \, y\). For any \(\bar{r}\in R\) it holds that \(\varepsilon_{\bar{r}\bar{y}}^C\geq \varepsilon_{\bar{r}y}^C\). Hence, there exists a \(r\in R\) such that \(\varepsilon_{ry}^C\leq\max_{y\in Y_N} \min_{r\in R} \varepsilon_{ry}^C\).
\end{proof}

The \(\varepsilon\)-spacing value of a representation \(R\subseteq Y_N\) is defined as the smallest value of \(\varepsilon\) such that one point in \(R\) \(\varepsilon\)-dominates (w.\,r.\,t.\ the ordering cone \(C\)) another point in \(R\). It is given by 
\begin{align*}
    \min_{\substack{r, \bar{r} \in R \\ r \neq \bar{r}}} \varepsilon_{r\bar{r}}^C.
\end{align*} 

\begin{figure}[htbp]
    \centering
    \begin{subfigure}[t]{0.45\linewidth}
        \centering
        \resizebox{\linewidth}{!}{
            \begin{tikzpicture} [scale = 0.5]
        
            \fill[mycolor!20] (1*9.7/10.5,6*9.7/10.5) -- (-5.6/10.5,7) -- (10,7) -- (10,6*9.7/10.5) -- (1*9.7/10.5,6*9.7/10.5);
            \fill[mycolor!20] (4*9.7/10.5,4*9.7/10.5) -- (4.1/10.5,7) -- (10,7) -- (10,4*9.7/10.5) -- (4*9.7/10.5,4*9.7/10.5);
            \fill[mycolor!20] (6*9.7/10.5,3*9.7/10.5) -- (13.8/10.5,7) -- (10,7) -- (10,3*9.7/10.5) -- (6*9.7/10.5,3*9.7/10.5);
            \fill[mycolor!20] (9.5*9.7/10.5,1*9.7/10.5) -- (28.35/10.5,7) -- (10,7) -- (10,1*9.7/10.5) -- (9.5*9.7/10.5,1*9.7/10.5);
            
            \fill[mycolor!40] (1,6) -- (0,7) -- (10,7) -- (10,6) --(1,6);
            \fill[mycolor!40] (4,4) -- (1,7) -- (10,7) -- (10,4) --(4,4);
            \fill[mycolor!40] (6,3) -- (2,7) -- (10,7) -- (10,3) --(6,3);
            \fill[mycolor!40] (9.5,1) -- (3.5,7) -- (10,7) -- (10,1) -- (9.5,1);
        
            \draw[dashed, black!80] (0,0) -- (1,6);
            \draw[dashed, black!80] (0,0) -- (4,4);
            \draw[dashed, black!80] (0,0) -- (6,3);
            \draw[dashed, black!80] (0,0) -- (9.5,1);
            
            \draw[->] (-1,0) -- (10,0) node[right] {$f_1$};
            \draw[->] (0,-1) -- (0,7) node[above] {$f_2$};
        
            \filldraw[] (1,6)  node[above right] {$r^1$};
            \filldraw[] (4,4) node[above right] {$r^2$};
            \filldraw[] (6,3) node[above right] {$r^3$};
            \filldraw[] (9.5,1) node[above right] {$r^4$};
            \draw plot[only marks, mark=x, mark size=3pt] coordinates {(1,6) (4,4) (6,3) (9.5,1)};
            \draw[draw=mycolor!70!black] plot[only marks, mark=*, mark size=1pt] coordinates {(0.3,6.5) (1.4,5.7) (3,4.7) (4.3,3.9) (5,3.6) (5.6,3.3) (7,2.7)};
            
            \filldraw[black!80] (1*9.7/10.5,6*9.7/10.5)  node[below right, xshift=-1mm, yshift=0.7mm] {\tiny{$\frac{r^1}{1+\bar{\varepsilon}}$}};
            \filldraw[black!80] (4*9.7/10.5,4*9.7/10.5)  node[below] {\tiny{$\frac{r^2}{1+\bar{\varepsilon}}$}};
            \filldraw[black!80] (6*9.7/10.5,3*9.7/10.5) node[below] {\tiny{$\frac{r^3}{1+\bar{\varepsilon}}$}};
            \filldraw[black!80] (9.5*9.7/10.5,1*9.7/10.5) node[below, yshift=0.5mm] {\tiny{$\frac{r^4}{1+\bar{\varepsilon}}$}};

            \draw[black!80] plot[only marks, mark=x, mark size=3pt] coordinates {(1*9.7/10.5,6*9.7/10.5) (4*9.7/10.5,4*9.7/10.5) (6*9.7/10.5,3*9.7/10.5) (9.5*9.7/10.5,1*9.7/10.5)};
        
        \end{tikzpicture}
        }
        \caption{\(\varepsilon\text{-coverage}\) with \(\bar{\varepsilon}=0.5\)}
    \end{subfigure}
    \hfill
    \begin{subfigure}[t]{0.45\linewidth}
        \centering
        \resizebox{\linewidth}{!}{
            \begin{tikzpicture} [scale = 0.5]
\fill[mycolor!20] (1/1.125,6/1.125) -- (-7/9,7) -- (10,7) -- (10,6/1.125) -- (1/1.125,6/1.125);
            \fill[mycolor!20] (4/1.125,4/1.125) -- (1/9,7) -- (10,7) -- (10,4/1.125) -- (4/1.125,4/1.125);
            \fill[mycolor!20] (6/1.125,3/1.125) -- (1,7) -- (10,7) -- (10,3/1.125) -- (6/1.125,3/1.125);
            \fill[mycolor!20] (9.5/1.125,1/1.125) -- (2.5,7) -- (10,7) -- (10,1*9.7/10.5) -- (10/1.125,1/1.125);
        
            \fill[mycolor!40] (1,6) -- (0,7) -- (10,7) -- (10,6) --(1,6);
            \fill[mycolor!40] (4,4) -- (1,7) -- (10,7) -- (10,4) --(4,4);
            \fill[mycolor!40] (6,3) -- (2,7) -- (10,7) -- (10,3) --(6,3);
            \fill[mycolor!40] (9.5,1) -- (3.5,7) -- (10,7) -- (10,1) -- (9.5,1);
        
            \draw[dashed, black!80] (0,0) -- (1,6);
            \draw[dashed, black!80] (0,0) -- (4,4);
            \draw[dashed, black!80] (0,0) -- (6,3);
            \draw[dashed, black!80] (0,0) -- (9.5,1);

            \draw[densely dotted, thick, mycolor!70!black] (10,3/1.125) -- (6/1.125,3/1.125);
            \draw[densely dotted, thick, mycolor!70!black] (1,7) -- (6/1.125,3/1.125);
            
            \draw[->] (-1,0) -- (10,0) node[right] {$f_1$};
            \draw[->] (0,-1) -- (0,7) node[above] {$f_2$};
            \filldraw[] (1,6)  node[above right] {$r^1$};
            \filldraw[] (4,4) node[above right] {$r^2$};
            \filldraw[] (6,3) node[above right] {$r^3$};
            \filldraw[] (9.5,1) node[above right] {$r^4$};
            \draw plot[only marks, mark=x, mark size=3pt] coordinates {(1,6) (4,4) (6,3) (9.5,1)};
            
            \filldraw[black!80] (1/1.125,6/1.125) node[below right, xshift=-1mm, yshift=0.7mm] {\tiny{$\frac{r^1}{1+\hat{\varepsilon}}$}};
        
            \filldraw[black!80] (4/1.125,4/1.125) node[below] {\tiny{$\frac{r^2}{1+\hat{\varepsilon}}$}};
            
            \filldraw[black!80] (6/1.125,3/1.125) node[below] {\tiny{$\frac{r^3}{1+\hat{\varepsilon}}$}};
        
            \filldraw[black!80] (9.5/1.125,1/1.125) node[below, yshift=0.7mm] {\tiny{$\frac{r^4}{1+\hat{\varepsilon}}$}};

            \draw[black!80] plot[only marks, mark=x, mark size=3pt] coordinates {(1/1.125,6/1.125) (4/1.125,4/1.125) (6/1.125,3/1.125) (9.5/1.125,1/1.125)};
        
        \end{tikzpicture}
        }
        \caption{\(\varepsilon\text{-spacing }\) with \(\hat{\varepsilon}=0.525\)}
    \end{subfigure}

    \caption{Representation \(R=\{r^1,r^2,r^3,r^4\}\) (marked with \(\times\)) and the other non-dominated points \(Y_N\setminus R\) (marked with \(\boldsymbol{\cdot}\)). The violet area depicts \(\bigcup_i (r^i+C)\) while the light violet area corresponds to \(\bigcup_i (\frac{r^i}{1+\varepsilon}+C)\).}
    \label{fig:epsRep}
\end{figure} 
In Figure~\ref{fig:epsRep} an example for the \(\varepsilon\)-coverage value and the \(\varepsilon\)-spacing value for a given non-dominated set and a given representation is illustrated.

\begin{lemma}\label{lem:eps-equ}
    Let \(C\subset\R^p\) be a polyhedral ordering cone, \(y,\bar{y}\in\R^p\), \(\varepsilon\geq 0\) and \(A_C\in\R^{m\times p}\), with \(C=\hcone(A_C)\). It holds:
    \begin{align*}
    y\preceqq_{\!\!C,\varepsilon} \, \bar{y} 
    \iff A_C \cdot y \epreceqq \,A_C\cdot \bar{y}
    \end{align*}
\end{lemma}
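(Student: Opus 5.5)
The plan is to unfold both sides into componentwise inequalities and observe that they coincide, using only linearity of \(A_C\). The subtlety is in the definitions. In Definition~\ref{def:eps-dom}, \(y\preceqq_{\!\!C}\,(1+\varepsilon)\bar{y}\) is written with the weak-dominance symbol. However, the surrounding discussion and Lemma~\ref{lem:epsilonEquivalence} treat \(\varepsilon\)-dominance as "\((1+\varepsilon)\bar y\in y+C\)", i.e.\ membership in the closed cone. Likewise, Definition~\ref{def:eps-dom-pareto} uses non-strict inequalities, i.e.\ the closed Pareto cone. So the first step is to fix the reading
\[
y\preceqq_{\!\!C,\varepsilon}\,\bar y \iff (1+\varepsilon)\bar y - y\in C .
\]
This is the reading under which the statement is true and consistent with Definition~\ref{def:eps-dom-pareto}. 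I will state it explicitly at the start.

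With this reading, the chain of equivalences is as follows. By \(C=\hcone(A_C)\), we have \((1+\varepsilon)\bar y-y\in C\) iff \(A_C\bigl((1+\varepsilon)\bar y-y\bigr)\geqq 0\). By linearity this is \((1+\varepsilon)A_C\bar y - A_C y\geqq 0\). Written row by row with \(a_i\) the rows of \(A_C\), it reads
\[
(A_C y)_i\le (1+\varepsilon)(A_C\bar y)_i \quad \text{for all } i\in\{1,\dots,m\}.
\]
This is exactly the defining condition of Definition~\ref{def:eps-dom-pareto} applied to the points \(A_C y, A_C\bar y\in\R^m\) with the Pareto cone of \(\R^m\). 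So the right-hand side \(A_C y\epreceqq A_C\bar y\) follows, and every step is reversible.

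The only real obstacle is a bookkeeping mismatch rather than a mathematical one. Definition~\ref{def:eps-dom-pareto} was stated for points in \(\R^p_{\geq}\), while \(A_C y\) need not be nonnegative and lives in \(\R^m\). I will remark that the defining inequality \(y_i\le(1+\varepsilon)\bar y_i\) makes sense for arbitrary vectors in any dimension. I will use it in that extended sense and note that if \(y,\bar y\in C\), then \(A_C y, A_C\bar y\in\R^m_{\geq}\), so the original domain restriction is met. This is the situation singled out after Lemma~\ref{lem:epsilonEquivalence} where \(\varepsilon\)-dominance is meaningful. No rank assumption on \(A_C\) is needed, since only the description \(C=\hcone(A_C)\) is used.
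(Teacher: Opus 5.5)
Your proposal is correct and uses the same chain of equivalences as the paper's proof: unfold Definition~\ref{def:eps-dom}, use \(C=\hcone(A_C)\) and linearity of \(A_C\) to rewrite cone membership as \((A_C y)_i\le(1+\varepsilon)(A_C\bar y)_i\) for all \(i\in\{1,\dots,m\}\), and recognize Definition~\ref{def:eps-dom-pareto} in \(\R^m\). Your explicit closed-cone reading \((1+\varepsilon)\bar y-y\in C\) is needed, since the literal interior-based definition already fails for \(y=\bar y\), \(\varepsilon=0\); together with your remark on the nonnegativity domain of Definition~\ref{def:eps-dom-pareto}, it makes explicit what the paper's proof leaves implicit.
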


\begin{proof}\quad
    \(\begin{aligned}[t]
    y\preceqq_{\!\!C,\varepsilon} \,\bar{y} &\stackrel{\text{Def.~\ref{def:eps-dom}}}{\iff} 
    y \preceqq_{\!\!C} \,(1+\varepsilon) \, \bar{y} \\
    & \iff A_C \cdot y \preceqq (1+\varepsilon) \,A_C\cdot \bar{y} \\
    &\stackrel{\text{Def.~\ref{def:eps-dom-pareto}}}{\iff} A_C \cdot y \epreceqq \,A_C\cdot \bar{y}
    \end{aligned}\)\\[-2.5ex]\qedhere
\end{proof}

Lemma~\ref{lem:eps-equ} shows that the \(\varepsilon\)-representation relation for a optimization problem with a polyhedral ordering cone and the corresponding Pareto optimization problem are equivalent, i.\,e., \(\varepsilon^C_{y\bar{y}}=\varepsilon_{A_C\cdot y,A_C\cdot \bar{y}}\).

We conclude that solving the \(\varepsilon\)-\ref{eq:representationProblem} for \ref{eq:MOP_C} is equivalent to the solution of the \(\varepsilon\)-\ref{eq:representationProblem} on the corresponding MOP, which is generated by a linear transformation of the \ref{eq:MOP_C} with the matrix \(A_C\):
\begin{align*}
    \max_{y\in Y_N}\; \min_{r\in R} \varepsilon_{ry}^C&= \max_{y\in (A_C\cdot Y)_N} \min_{r\in A_C\cdot R} \varepsilon_{ry}, \\
    \min_{\substack{r, \bar{r} \in R \\ r \neq \bar{r}}} \varepsilon_{r\bar{r}}^C &= \min_{\substack{r, \bar{r} \in A_C\cdot R \\ r \neq \bar{r}}} \varepsilon_{r\bar{r}}.
\end{align*}
Accordingly, the existing theory for the Pareto case can be applied when considering multi-objective optimization problems with polyhedral ordering cones.

\subsection{Representation Relation Based on \(\gamma\)-Dominance} \label{sec:Gamma}

The \(\gamma\)-representation relation is also based on a modification of the ordering cone. In the case of \(\varepsilon\)-dominance, the shape of the cone remains unchanged but is shifted closer to the origin. In contrast, for \(\gamma\)-dominance, the cone is enlarged by expanding its facets `outwards'. 

\begin{figure}[htbp]
    \centering

    \begin{subfigure}[t]{0.45\linewidth}
        \centering
        \resizebox{\linewidth}{!}{
            \begin{tikzpicture}
                \fill[mycolor!20] (2,2) -- (4,0) -- (4,4) -- (0,4) -- (2,2);
                \fill[mycolor!40] (2,2) -- (4,1) -- (4,4) -- (1,4) -- (2,2);
                \fill[mycolor!60] (2,2) -- (4,2) -- (4,4) -- (2,4) -- (2,2);
                
                \draw[->] (-0.5,0) -- (4,0) node[right] {$f_1$};
                \draw[->] (0,-0.5) -- (0,4) node[above] {$f_2$};
    
                \filldraw[] (2,2) circle (1.5pt) node[below left] {$y$};
            \end{tikzpicture}
        }
        \caption{Pareto}
        \label{fig:gamma-dom-pareto}
    \end{subfigure}
    \hfill
    \begin{subfigure}[t]{0.45\linewidth}
        \centering
        \resizebox{\linewidth}{!}{
            \begin{tikzpicture}
                \fill[mycolor!20] (2,2) -- (4,{2*sqrt(2)/(sqrt(2)+1)}) -- (4,4) -- (-0.5,4) -- (-0.5, {5/sqrt(2)-0.5}) -- (2,2);
                \fill[mycolor!40] (2,2) -- (4,{sqrt(2)*2/(sqrt(2)+0.5)}) -- (4,4) -- (-0.5,4) -- (-0.5,{(sqrt(2)+4.5)/(sqrt(2)*0.5+1)}) -- (2,2);
                \fill[mycolor!60] (2,2) -- (4,2) -- (4,4) -- (0,4) -- (2,2);
                
                \draw[->] (-0.5,0) -- (4,0) node[right] {$f_1$};
                \draw[->] (0,-0.5) -- (0,4) node[above] {$f_2$};
    
                \filldraw[] (2,2) circle (1.5pt) node[below left] {$y$};
            \end{tikzpicture}
        }
        \caption{Ordinal}
        \label{fig:gamma-dom-ordinal}
    \end{subfigure}

    \caption{$\gamma$-enlarged ordering cones (with $\gamma=0.5$ and $\gamma=1$)\label{fig:gamma-dom}}
\end{figure} 
\begin{definition}[cf.~\citealt{humbertropers25representation}]
    Let \(\gamma\in[0,1]\) and \(y,\bar{y}\in \R^p\). The point \(y\) is said to \(\gamma\)-dominate \(\bar{y}\) with respect to the Pareto cone, denoted by \(y\preceqq_{\!\!\gamma} \,\bar{y}\) if
    \begin{align*}
        y_i \leq \bar{y}_i + \gamma\sum_{\substack{k=1 \\k\neq i }}^p (\bar{y}_k-y_k) \qquad \forall i\in\{1,\dots,p\}.
    \end{align*}
\end{definition}

Note that for \(\gamma=0\) the \(\gamma\)-dominance is equivalent to the Pareto-dominance, except that a point \(\gamma\)-dominates itself. As \(\gamma\) increases, the cone opens symmetrically until \(\gamma=1\), where it corresponds to the weighted sum 
with equal weights. In the following, we will denote the \(\gamma\)-enlarged Pareto cone by \(P_\gamma\). Figure~\ref{fig:gamma-dom-pareto} illustrates the cone in the Pareto case. 

\begin{lemma} \label{lem:gam-dom-Pareto}
    Let \(\gamma\in[0,1]\) and \(y,\bar{y}\in \R^p\). If \(y\preceqq \bar{y}\), then \(y\preceqq_{\!\!\gamma} \,\bar{y}\).
\end{lemma}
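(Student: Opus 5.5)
The plan is to verify the defining inequalities of \(\gamma\)-dominance one coordinate at a time, directly from the componentwise description of Pareto-dominance. First fix the meaning of the hypothesis. With the paper's notation, \(y\preceqq \bar{y}\) means \(y\in\bar{y}-\interior(P)\), which gives \(y_k<\bar{y}_k\) for all \(k\). For the argument it is enough to use the weaker consequence \(y_k\leq \bar{y}_k\), i.e.\ \(\bar{y}-y\in P\). This weaker form covers the strict, weak, and componentwise readings of the hypothesis at the same time.

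Next, fix an index \(i\in\{1,\dots,p\}\) and split the required inequality into two separate facts. The first is \(y_i\leq\bar{y}_i\), which holds by the hypothesis. The second is that the correction term is nonnegative:
\begin{align*}
    \gamma\sum_{\substack{k=1\\k\neq i}}^p(\bar{y}_k-y_k)\geq 0 .
\end{align*}
This holds because \(\gamma\geq 0\) and every summand \(\bar{y}_k-y_k\) is nonnegative. Adding the two facts gives \(y_i\leq\bar{y}_i+\gamma\sum_{k\neq i}(\bar{y}_k-y_k)\). Since \(i\) was arbitrary, this is exactly \(y\preceqq_{\!\!\gamma}\,\bar{y}\).

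The argument needs no case distinction, and it does not use \(\gamma\leq 1\); only \(\gamma\geq 0\) enters. I do not expect a real obstacle. The only point needing care is the interpretation of \(\preceqq\): the paper's definition uses \(\interior(P)\), while the surrounding text treats Pareto-dominance componentwise. The proof should state that only \(\bar{y}-y\in P\) is used, so the lemma holds under either reading.

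Geometrically, the lemma says \(P\subseteq P_\gamma\). This is consistent with the description of \(P_\gamma\) as obtained by opening the facets of \(P\) outward. This remark is the form in which the lemma will presumably be generalized to polyhedral cones later.
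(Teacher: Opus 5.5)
Your proof is correct and matches the paper's argument: both derive \(y_i\leq\bar{y}_i\) for all \(i\) from the hypothesis, then add the correction term \(\gamma\sum_{k\neq i}(\bar{y}_k-y_k)\), which is nonnegative because \(\gamma\geq 0\) and every summand is nonnegative. Your side remarks are also accurate: only \(\bar{y}-y\in P\) is used, so the interior-based definition of the notation causes no trouble, and the bound \(\gamma\leq 1\) is never needed.
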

\begin{proof}\quad\(\begin{aligned}[t]
    y\preceqq \bar{y} \Longrightarrow& \,y_i \leq \bar{y}_i \quad \forall i\in\{1,\dots,p\}\\
    \Longrightarrow& \, y_i \leq \bar{y}_i + \underbracket[0.5pt]{\gamma}_{\ge 0} \sum_{\substack{k=1 \\k\neq i }}^p \underbracket[0.5pt]{(\bar{y}_k-y_k)}_{\ge 0} \qquad \forall i\in\{1,\dots,p\}
\end{aligned}\)\\[-3ex]\qedhere
\end{proof}

Lemma~\ref{lem:gam-dom-Pareto} shows that the \(\gamma\)-ordering cone contains the Pareto cone and that the \(\gamma\)-dominance is a relaxation of Pareto-dominance for all \(\gamma\in[0,1]\). The concept of \(\gamma\)-dominance can be equivalently written as:
\begin{align*}
    y \preceqq_{\!\!\gamma} \,\bar{y} &\iff y_i \leq \bar{y}_i + \gamma\,\sum_{\substack{k=1 \\k\neq i }}^p (\bar{y}_k - y_k)   \quad \forall i\in\{1,\ldots,p\}\\
    \iff& y_i + \gamma\,\sum_{\substack{k=1 \\k\neq i }}^p  y_k \leq \bar{y}_i + \gamma\,\sum_{\substack{k=1 \\k\neq i }}^p  \bar{y}_k   \quad \forall i\in\{1,\ldots,p\}\\
    \iff&(\gamma, \ldots, \gamma, 
    \underset{\substack{\uparrow\\\mathclap{i\text{-th comp.}}}}{1}, 
    \gamma, \ldots, \gamma) \cdot y 
    \leq  
    (\gamma, \ldots, \gamma, 
    \underset{\substack{\uparrow\\\mathclap{i\text{-th comp.}}}}{1},
    \gamma, \ldots, \gamma) \cdot \bar{y} \qquad \forall i\in\{1,\ldots,p\}
\end{align*}
Using this reformulation, we see that the \(\gamma\)-enlarged Pareto cone \(P_\gamma\) is induced by the matrix \(A_\gamma\), i.\,e., \(P_\gamma=\hcone(A_\gamma)\) with:
\begin{equation*}
    A_\gamma\coloneqq\begin{tikzpicture}[baseline={($(current bounding box.center)- (0,.8ex)$)}]
        \matrix (m) [matrix of math nodes,nodes in empty cells,right delimiter={)},left delimiter={(},inner sep=-2pt,nodes={inner sep=1.2ex}]{
        1 & \gamma &  & \gamma\\
        \gamma&  & &  \\
        & & & \gamma\\
        \gamma&  & \gamma & 1 \\
        };
        \draw[loosely dotted,thick] (m-1-2)-- (m-1-4);
        \draw[loosely dotted,thick] (m-1-1)-- (m-4-4);
        \draw[loosely dotted,thick] (m-1-2)-- (m-3-4);
        \draw[loosely dotted,thick] (m-1-4)-- (m-3-4);
        \draw[loosely dotted,thick] (m-2-1)-- (m-4-3);
        \draw[loosely dotted,thick] (m-2-1)-- (m-4-1);
        \draw[loosely dotted,thick] (m-4-1)-- (m-4-3);
    \end{tikzpicture}\in\R^{p\times p}.
\end{equation*}

Note that there is a fundamental difference between \(\gamma\)- and \(\varepsilon\)-dominance. With respect to the \(\varepsilon\)-dominance, for any two points \(y,\bar{y}\in P\), there exists an \(\varepsilon\geq0\) such that \(y\) \(\varepsilon\)-dominates \(\bar{y}\). This property does not hold for the \(\gamma\)-dominance. In particular, if \(\sum_{k=1}^p y_k> \sum_{k=1}^p \bar{y}_k\) the vector \(y\) does not \(\gamma\)-dominate \(\bar{y}\) for any \(\gamma\in[0,1]\). However, for any two points \(y,\bar{y}\in\R^p\), there exists a value of \(\gamma\in[0,1]\) such that either \(y\preceqq_{\!\!\gamma} \,\bar{y}\) or \(\bar{y}\preceqq_{\!\!\gamma} \, y\). Another difference is that, in contrast to \(\varepsilon\)-dominance, the \(\gamma\)-dominance is defined for all points in \(\R^p\) and not only for those located in the ordering cone.

In the Pareto case, the \emph{degree of acceptability} for \(y\) representing \(\bar{y}\) \emph{based on \(\gamma\)-dominance} is defined by:
\begin{align*}
    \gamma_{y,\bar{y}}=\begin{cases}
        \text{undefined} \quad& \text{if } \sum\limits_{k=1}^p y_k> \sum\limits_{k=1}^p \bar{y}_k \\
        0 \quad &\text{if }  y \preceqq \bar{y} \\
        \displaystyle \max\limits_{i\colon y_i<\bar{y}_i} \biggl\{\frac{y_i-\bar{y}_i}{\sum_{k=1,k\neq i}^{p} (\bar{y}_k-y_k)}\biggr\} \quad &\text{otherwise}
    \end{cases}
\end{align*}
It specifies the smallest value of \(\gamma\) such that the vector \(y\) \(\gamma\)-dominates the vector \(\bar{y}\) if such a value exists. Otherwise, \(\gamma_{y,\bar{y}}\) is undefined. In the following, we want to extend the concept of \(\gamma\)-dominance to general ordering cones.

\begin{definition}
    Let \(\gamma\in[0,1]\), \(y,\bar{y}\in \R^p\) and \(C\subset\R^p\) be a polyhedral ordering cone. We say the point \(y\) \(\gamma\)-dominates \(\bar{y}\) with respect to \(C\) (\(y\preceqq_{\!\!C,\gamma} \, \bar{y}\)) if
    \begin{align*}
        A_C\cdot y \preceqq_{\!\!\gamma} \,A_C \cdot \bar{y},
    \end{align*} with \(A_C\in\R^{m\times p}\) being a row-normalized matrix with \(C=\hcone(A_C)\).
\end{definition}

Like in the previous section, the matrix \(A_C\) is row-normalized since the norm of the rows impacts the \(\gamma\)-enlarged cone, while the ordering cone \(C=\hcone(A_C)\) is invariant w.\,r.\,t.\ it. For any polyhedral cone \(C\) the row-normalized matrix \(A_C\) with \(C=\hcone(A_C)\) is uniquely defined (modulo row permutations). This guarantees, that the \(\gamma\)-dominance with respect to a polyhedral ordering cone is well-defined. Figure~\ref{fig:gamma-dom-ordinal} illustrates the \(\gamma\)-enlarged ordinal cone in the bi-objective case. 

\begin{lemma} \label{lem:gam-dom-general}
    Let \(\gamma\in[0,1]\), \(y,\bar{y}\in \R^p\), \(C\subset\R^p\) be a polyhedral ordering cone and \(A_C\in\R^{m\times p}\) a row-normalized matrix, with \(\hcone(A_C)=C\). If \(y\preceqq_{\!\!C} \,\bar{y}\), then \(y\preceqq_{\!\!C,\gamma} \,\bar{y}\).
\end{lemma}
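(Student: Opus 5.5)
The plan is to reduce the statement to the Pareto case, Lemma~\ref{lem:gam-dom-Pareto}, through the transformation by \(A_C\). This is the same strategy the paper uses for \(\varepsilon\)-dominance in Lemma~\ref{lem:eps-equ}. The definition of \(\preceqq_{\!\!C,\gamma}\) is already phrased via \(A_C\), so the only thing to verify is that cone-dominance in the original space becomes Pareto-dominance after the transformation.

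First, I would unpack \(y\preceqq_{\!\!C}\,\bar{y}\). I read it at the level the paper uses in Lemma~\ref{lem:eps-equ}, i.e.\ \(\bar{y}-y\in C\); under the stated convention with \(\interior(C)\) it is an even stronger assumption and implies this. Since \(C=\hcone(A_C)\), membership \(\bar{y}-y\in C\) means \(A_C(\bar{y}-y)\geqq 0\). By linearity this is \(A_C\cdot y\leqq A_C\cdot\bar{y}\) componentwise in \(\R^m\), which is exactly \(A_C\cdot y\preceqq A_C\cdot\bar{y}\) with respect to the Pareto cone in \(\R^m\). This is the same step that appears in the second line of the proof of Lemma~\ref{lem:eps-equ}.

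Second, I would apply Lemma~\ref{lem:gam-dom-Pareto} in dimension \(m\) to the points \(A_C\cdot y\) and \(A_C\cdot\bar{y}\). That lemma is stated for arbitrary points of the Euclidean space and requires no sign or normalization assumptions. It yields \(A_C\cdot y\preceqq_{\!\!\gamma}\,A_C\cdot\bar{y}\), which is by definition \(y\preceqq_{\!\!C,\gamma}\,\bar{y}\).

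Row-normalization of \(A_C\) plays no role in this argument; it only ensures that \(\preceqq_{\!\!C,\gamma}\) is well-defined. The one point that needs care is therefore not computational. One must check that Lemma~\ref{lem:gam-dom-Pareto} holds verbatim with \(p\) replaced by \(m\), where \(m\) may exceed \(p\). Its proof uses only \(\gamma\ge 0\) and nonnegativity of the differences \((A_C\bar{y}-A_Cy)_k\), both of which hold here, so the lemma transfers directly.
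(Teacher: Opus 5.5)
Your proposal is correct and follows essentially the same route as the paper's proof. You rewrite \(y\preceqq_{\!\!C}\,\bar{y}\) as \(A_C\cdot y\preceqq A_C\cdot\bar{y}\) using \(C=\hcone(A_C)\), apply Lemma~\ref{lem:gam-dom-Pareto} in \(\R^m\), and read off \(y\preceqq_{\!\!C,\gamma}\,\bar{y}\) from the definition; your remarks on the interior-versus-closed convention and on replacing \(p\) by \(m\) correctly spell out what the paper leaves implicit.
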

\begin{proof}\(\begin{aligned}[t]
    y\preceqq_{\!\!C} \,\bar{y} \iff& A_C\cdot y \preceqq A_C\cdot \bar{y} \\
    \stackrel{\text{Lem.~\ref{lem:gam-dom-Pareto}}}{\Longrightarrow}& \, A_C\cdot y \preceqq_{\!\!\gamma} \, A_C\cdot \bar{y} \iff y \preceqq_{\!\!C,\gamma} \, \bar{y} 
\end{aligned}\)\\[-2ex]
\end{proof}
Lemma~\ref{lem:gam-dom-general} shows that the cone-dominance is a stronger relation, then the \(\gamma\)-dominance and accordingly, the ordering cone \(C\) is contained in its \(\gamma\)-enlarged cone, which we will denote by \(C_\gamma\). Using the reformulation of the \(\gamma\)-dominance with respect to the Pareto cone, a point \(y\) \(\gamma\)-dominates \(\bar{y}\) with \(\gamma\in[0,1]\) and \(y,\bar{y}\in\R^p\), if and only if

\begin{align} \label{eq:gammaReformulation}
    (\gamma, \ldots, \gamma, 
    \underset{\substack{\uparrow\\\mathclap{i\text{-th comp.}}}}{1}, 
    \gamma, \ldots, \gamma) \cdot A_C \cdot y \leq  (\gamma, \ldots, \gamma, \underset{\substack{\uparrow\\\mathclap{i\text{-th comp.}}}}{1}, \gamma, \ldots, \gamma) \cdot A_C \cdot \bar{y}  \quad \forall i\in\{1,\ldots,m\}.
\end{align} 
This reformulation implies that the \(\gamma\)-enlarged cone \(C_\gamma\) of a polyhedral ordering cone \(C\) is given by \(C_\gamma=\hcone(A_\gamma\cdot A_C)\) with \(A_\gamma\) given by
\begin{equation*}
    A_\gamma\coloneqq\begin{tikzpicture}[baseline=(current bounding box.center)]
        \matrix (m) [matrix of math nodes,nodes in empty cells,right delimiter={)},left delimiter={(},inner sep=-2pt,nodes={inner sep=1.2ex}]{
        1 & \gamma &  & \gamma\\
        \gamma&  & &  \\
        & & & \gamma\\
        \gamma&  & \gamma & 1 \\
        };
        \draw[loosely dotted,thick] (m-1-2)-- (m-1-4);
        \draw[loosely dotted,thick] (m-1-1)-- (m-4-4);
        \draw[loosely dotted,thick] (m-1-2)-- (m-3-4);
        \draw[loosely dotted,thick] (m-1-4)-- (m-3-4);
        \draw[loosely dotted,thick] (m-2-1)-- (m-4-3);
        \draw[loosely dotted,thick] (m-2-1)-- (m-4-1);
        \draw[loosely dotted,thick] (m-4-1)-- (m-4-3);
    \end{tikzpicture}\in\R^{m\times m}.
\end{equation*}
For example the \(\gamma\)-enlarged ordinal cone in the bi-objective case is given by 

\begin{align*}
    \hcone\left(\begin{pmatrix}
        1 & \gamma\\
        \gamma & 1
    \end{pmatrix}\cdot \ \begin{pmatrix}
        \frac{1}{\sqrt{2}} & \frac{1}{\sqrt{2}}\\
        0 & 1
    \end{pmatrix} \right) = \hcone\begin{pmatrix}
        \frac{1}{\sqrt{2}} & \frac{1}{\sqrt{2}}+\gamma\\
        \frac{\gamma}{\sqrt{2}} & \frac{\gamma}{\sqrt{2}}+1
    \end{pmatrix}.
\end{align*}
Additionally, for \(\gamma=1\) the \(\gamma\)-enlarged cone \(C_\gamma\) is always a half-space. For the ordinal cone the normal vector of this halfspace \(C_\gamma\) with \(\gamma=1\) is given by

\begin{equation*}
    \begin{pmatrix}
    1 & \cdots & 1
    \end{pmatrix}
\begin{tikzpicture} [baseline=(current bounding box.center)]
    \matrix (n) [
        matrix of math nodes,
        nodes in empty cells,
        right delimiter={)},
        left delimiter={(},
        inner sep=-2pt,
        nodes={inner sep=1ex}
    ]{
        \frac{1}{\sqrt{p}} & & & \frac{1}{\sqrt{p}} & \frac{1}{\sqrt{p}}\\
         & \frac{1}{\sqrt{p-1}} & & \frac{1}{\sqrt{p-1}}& \frac{1}{\sqrt{p-1}}\\
        & & & & \\
        & & & \frac{1}{\sqrt{2}} & \frac{1}{\sqrt{2}}\\
        0 & & & & 1\\
    };

    \draw[loosely dotted,thick] (n-1-1)--(n-1-4);
    \draw[loosely dotted,thick] (n-2-2)--(n-4-4);
    \draw[loosely dotted,thick] (n-2-2)--(n-2-4);
    \draw[loosely dotted,thick] (n-2-4)--(n-4-4);
    \draw[loosely dotted,thick] (n-2-5)--(n-4-5);
\end{tikzpicture} = 
\biggl( \textstyle\frac{1}{\sqrt{p}}, \;\sum\limits_{i=p-1}^p \frac{1}{\sqrt{i}},\; \ldots, \;\sum\limits_{i=1}^p \frac{1}{\sqrt{i}} \biggr)
\end{equation*}

In the following, we investigate how ordering cones are opened, when considering the \(\gamma\)-dominance. For this purpose we determine the opening angles in the bi-objective case, i.\,e., the angle \(\theta_j\), \(j=1,2\) between normal vectors of corresponding facets of \(A_C\) and \(A_\gamma\, A_C\), respectively.

\begin{lemma}
    Let \(\gamma\in[0,1]\), \(C\subset\R^2\) be a polyhedral ordering cone and let \(A_C\in\R^{m\times p}\) be a row-normalized matrix with \(\hcone(A_C)=C\). For \(j=1,2\) let \(\theta_j\) denote the angle between the \(j\)-th row of matrix \(A_C\) and the \(j\)-th row of matrix \(A_\gamma\, A_C\). Then, \(\theta_1=\theta_2\). 
\end{lemma}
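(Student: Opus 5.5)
In the bi-objective case with $m=2$ (a pointed polyhedral cone in $\R^2$ has exactly two facets, so $A_C\in\R^{2\times 2}$), I would compute both angles directly. Denote the rows of the row-normalized matrix $A_C$ by $a^1,a^2\in\R^2$, so $\|a^1\|=\|a^2\|=1$. With
\[
A_\gamma=\begin{pmatrix}1&\gamma\\ \gamma&1\end{pmatrix},
\]
the rows of $A_\gamma A_C$ are $b^1=a^1+\gamma a^2$ and $b^2=a^2+\gamma a^1$. The plan is to write $\cos\theta_1$ and $\cos\theta_2$ explicitly via the inner product formula and show they coincide. Since both angles lie in $[0,\pi]$, where the cosine is injective, this gives $\theta_1=\theta_2$.

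First I compute the numerators. Set $c\coloneqq\langle a^1,a^2\rangle$. Using $\|a^j\|=1$,
\[
\langle a^1,b^1\rangle=1+\gamma c=\langle a^2,b^2\rangle.
\]

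Next I compute the denominators:
\[
\|b^1\|^2=1+2\gamma c+\gamma^2=\|b^2\|^2.
\]
The symmetry of $A_\gamma$ together with the equal norms of the rows of $A_C$ makes these expressions symmetric in the indices. Hence
\[
\cos\theta_1=\frac{1+\gamma c}{\sqrt{1+2\gamma c+\gamma^2}}=\cos\theta_2 .
\]

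The only point needing care is that $b^j\neq 0$, so that the angle is defined. Since $C$ is pointed, $\rank(A_C)=2$, so $a^1$ and $a^2$ are linearly independent. Then $a^1+\gamma a^2\neq 0$, and likewise $a^2+\gamma a^1\neq 0$, for every $\gamma\in[0,1]$. Equivalently, $1+2\gamma c+\gamma^2>0$ because $|c|<1$. I expect no real obstacle beyond this. The key observation is simply that row-normalization is exactly what makes the argument work: without $\|a^1\|=\|a^2\|$ the two cosines would differ. It may also be worth remarking that the angle is increasing in $\gamma$, but that is not needed for the statement.
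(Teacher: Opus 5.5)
Your proposal is correct and is the paper's own argument. Both you and the paper write the rows of \(A_\gamma A_C\) as \(a_1+\gamma a_2\) and \(a_2+\gamma a_1\), use \(\lVert a_j\rVert=1\) to get the numerator \(1+\gamma\langle a_1,a_2\rangle\) and squared norm \(1+2\gamma\langle a_1,a_2\rangle+\gamma^2\) for both indices, and conclude \(\cos\theta_1=\cos\theta_2\). Your check that \(a_j+\gamma a_{3-j}\neq 0\) (via pointedness and linear independence) and your remark on injectivity of the cosine on \([0,\pi]\) make explicit two small points the paper leaves unstated.
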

\begin{proof}
    The \(\gamma\)-enlarged ordering cone is defined by the following matrix
    {\footnotesize
    \begin{align*}
        A'\coloneqq A_\gamma\cdot A_C=\begin{pmatrix}
        a_{11}+\gamma \,a_{21} & a_{12}+\gamma \,a_{22} \\
        \gamma\, a_{11}+a_{21} & \gamma\, a_{12}+a_{22}
    \end{pmatrix}, \text{ with } A_\gamma=\begin{pmatrix}
        1 & \gamma\\
        \gamma & 1
    \end{pmatrix}, 
    A_C=\begin{pmatrix}
        a_{11} & a_{12}\\
        a_{21} & a_{22}
    \end{pmatrix}.
    \end{align*}}
    The angle \(\theta_1\) can be calculated by solving
    \begin{align*}
        \cos (\theta_1)=\frac{\langle a_1, a_1'\rangle}{\underbracket[0.5pt]{\lVert a_1\rVert}_{=1} \cdot\,\lVert a_1'\rVert},
    \end{align*}
    with \(a_1\coloneqq(a_{11}, a_{12})\) and \(a'_1\coloneqq (a_{11}+\gamma\,a_{21}, \; a_{12}+\gamma\,a_{22})\).
    We will now examine the individual components:
    \begin{align*}
        \langle a_1, a_1'\rangle&=a_{11}^2+\gamma\, a_{11}\,a_{21} + a_{12}^2+\gamma \,a_{12}\,a_{22}\\
        &\stackrel{\lVert a_1\rVert=1}{=}1+\gamma\, (a_{11}\,a_{21}+a_{12}a_{22}),
    \end{align*}
    \begin{align*}
        \lVert a_1'\rVert^2 &= a_{11}^2+2\,\gamma\, a_{11}\,a_{21}+\gamma^2\,a_{21}^2+a_{12}^2+2\,\gamma\, a_{12}\,a_{22}+\gamma^2\,a_{22}^2\\
&=1+2\,\gamma \,(a_{11}a_{21}+a_{12}\,a_{22}) +\gamma^2.
    \end{align*}
    With the analogous computation for \(\theta_2\) we obtain:
    \begin{align*}
        \cos (\theta_1) = \frac{1+\gamma\, (a_{11}a_{21}+a_{12}a_{22})}{\sqrt{1+2\,\gamma\,( a_{11}\,a_{21}+a_{12}\,a_{22}) +\gamma^2}}= \cos (\theta_2).
    \end{align*} 
Thus, for any value of \(\gamma\) the two opening angles are equal, i.\,e., \(\theta_1=\theta_2\). 
\end{proof}

Accordingly, in the bi-objective case, polyhedral  ordering cones are opened symmetrically as \(\gamma\) increases. When \(C\subset\R^p\), with \(p>2\) this result does not hold. This can be shown, for example, for the ordinal cone. The ordinal cone \(C_o\) in \(\R^3\) is generated by the row-normalized matrix
\begin{align*}
    A_{C_o}=\begin{pmatrix}
        \frac{1}{\sqrt{3}} & \frac{1}{\sqrt{3}} & \frac{1}{\sqrt{3}}\\
        0 & \frac{1}{\sqrt{2}} & \frac{1}{\sqrt{2}}\\
        0 & 0& 1
    \end{pmatrix}. 
\end{align*} 
The reformulation \eqref{eq:gammaReformulation} of the \(\gamma\)-dominance w.\,r.\,t.\ the ordering cone \(C\) implies that the ordering cone is enlarged to a half-space for \(\gamma=1\). However, there isn't a half-space for which the angles between the three row-vectors of \(A_{C_o}\) and the half-space are equal.
This raises the question which ordering cones admit a half-space for which the angles between the half-space and all normal vectors of the cone are equal. Such a half-space exists only for ordering cones possessing a \emph{symmetry axis} \(S\), defined as the sum of the normal vectors \(a_i\) of the cone,
\begin{align*}
    S=\sum_{i=1}^m a_i.
\end{align*}
In other words, the angle between the symmetry axis \(S\) and each normal vector \(a_i\) of \(A_C\in\R^{m\times p}\) is the same. In this case, is the half-space obtained for \(\gamma=1\), and does the ordering cone remain symmetrically opened for all other values of \(\gamma\)?

\begin{lemma}
    Let \(C\subset\R^p, \, p>2\) be a polyhedral ordering cone, \(A_C\in\R^{m\times p}\) a row-normalized matrix with \(C=\hcone(A_C)\) and \(\gamma\in[0,1]\). If the cone has a symmetry axis \(S\), the \(\gamma\)-enlarged cone of \(C\) is a symmetrical opening of \(C\).
\end{lemma}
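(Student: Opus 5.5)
We interpret "symmetrical opening" as in the bi-objective lemma above. For every \(j\in\{1,\dots,m\}\) let \(\theta_j\) be the angle between the \(j\)-th row \(a_j\) of \(A_C\) and the \(j\)-th row \(a_j'\) of \(A_\gamma A_C\). The claim is that \(\theta_1=\dots=\theta_m\) for every \(\gamma\in[0,1]\). The plan is to compute \(\cos\theta_j\) exactly as in the bi-objective proof and then use the symmetry axis assumption to show that the resulting expression is independent of \(j\).

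\textbf{Step 1: compute \(\cos\theta_j\).} The \(j\)-th row of \(A_\gamma A_C\) is
\begin{align*}
a_j' = a_j + \gamma\sum_{k\neq j} a_k = (1-\gamma)\,a_j + \gamma S .
\end{align*}
Since \(\lVert a_j\rVert=1\), we obtain \(\langle a_j,a_j'\rangle = (1-\gamma) + \gamma\langle a_j,S\rangle\). We also obtain
\begin{align*}
\lVert a_j'\rVert^2=(1-\gamma)^2+2\gamma(1-\gamma)\langle a_j,S\rangle+\gamma^2\lVert S\rVert^2 .
\end{align*}
Hence \(\cos\theta_j\) depends on \(j\) only through the scalar \(\sigma_j:=\langle a_j,S\rangle\). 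The quantities \(\gamma\) and \(\lVert S\rVert\) are the same for all \(j\).

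\textbf{Step 2: use the symmetry axis.} The assumption that \(S\) is a symmetry axis means that the angle between \(S\) and each \(a_i\) is the same. Because every \(\lVert a_i\rVert=1\), this says \(\sigma_i=\langle a_i,S\rangle=\lVert S\rVert\cos\angle(a_i,S)\) is a constant \(\sigma\). We must have \(S\neq 0\) for the angle to be defined. This follows because \(C\) is pointed: \(A_C\) has full column rank, so \(S=0\) would force \(C\) to contain a line.

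Summing over \(i\) gives \(m\sigma=\sum_i\langle a_i,S\rangle=\lVert S\rVert^2\), so \(\sigma=\lVert S\rVert^2/m\). Substituting this into Step 1 shows that
\begin{align*}
\cos\theta_j=\frac{1-\gamma+\gamma\sigma}{\sqrt{(1-\gamma)^2+2\gamma(1-\gamma)\sigma+\gamma^2 m\sigma}}
\end{align*}
is independent of \(j\). Therefore \(\theta_1=\dots=\theta_m\) for all \(\gamma\in[0,1]\).

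\textbf{Step 3: the endpoint \(\gamma=1\).} Here every row equals \(S\), so \(C_\gamma\) is the half-space \(\{y:\langle S,y\rangle\ge0\}\). In that case each \(\theta_j\) equals the common angle between \(a_j\) and \(S\), which is the expected behaviour.

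The main obstacle is interpretive rather than computational. One has to fix precisely what "symmetrical opening" means, namely equal angles \(\theta_j\) between corresponding facet normals, as in the previous lemma. One also has to check that the symmetry-axis hypothesis translates into "\(\langle a_j,S\rangle\) is constant". A further point to verify is that \(\lVert a_j'\rVert\neq0\), so that the angles are defined. For \(\gamma<1\) this holds because \(\sigma>0\): indeed \(\sigma=\lVert S\rVert^2/m\) with \(S\neq0\), which makes every term under the square root nonnegative and at least one of them positive. For \(\gamma=1\) it holds because \(S\neq0\).
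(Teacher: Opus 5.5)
Your proof is correct and follows the paper's argument: both compute $\cos\theta_j$ for the row $a_j+\gamma\sum_{i\neq j}a_i$ and observe that it depends on $j$ only through $\langle a_j,S\rangle$, which the symmetry-axis hypothesis makes constant. Your rewriting $a_j'=(1-\gamma)a_j+\gamma S$ is a tidier form of the same expansion, and it keeps the $\gamma^2$ factor that the paper's $(\ast)$ drops.

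One side remark is wrong, though harmless. $S=0$ does not make $C$ contain a line: with full column rank it forces $C=\{0\}$, as with the four unit normals of a regular tetrahedron in $\R^3$. So $S\neq 0$ should instead be read as implicit in the hypothesis that the angles between $S$ and the $a_i$ are defined.
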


\begin{proof}
    Let \(a_1,\dots, a_m\) be the rows of \(A_C\), i.\,e., 
    \begin{align*}
        A_C=\begin{pmatrix}
            a_1\\
            \vdots\\
            a_m
        \end{pmatrix}.
    \end{align*} It holds \(\lVert a_i\rVert=1, \forall\ i\in\{1,\dots,m\}\) and 
    \begin{align*}
        \exists \kappa\in\R: \frac{\langle S, a_i\rangle}{\lVert S\rVert\cdot\lVert a_i\rVert}=\kappa, \ \forall i\in\{1,\dots,m\}.
    \end{align*} Consequently,
    \begin{align*}
        \exists \bar{\kappa}\in\R: \langle S, a_i\rangle=\bar{\kappa}, \ \forall i\in\{1,\dots,m\}.
    \end{align*} The \(\gamma\)-enlarged cone is given by:
    \begin{equation*}
        A_\gamma\cdot A_C=
        \begin{tikzpicture}[baseline=(current bounding box.center)]
        \matrix (m) [matrix of math nodes,nodes in empty cells,right delimiter={)},left delimiter={(},inner sep=-2pt,nodes={inner sep=1.2ex}]{
        1 & \gamma &  & \gamma\\
        \gamma&  & &  \\
        & & & \gamma\\
        \gamma&  & \gamma & 1 \\
        };
        \draw[loosely dotted,thick] (m-1-2)-- (m-1-4);
        \draw[loosely dotted,thick] (m-1-1)-- (m-4-4);
        \draw[loosely dotted,thick] (m-1-2)-- (m-3-4);
        \draw[loosely dotted,thick] (m-1-4)-- (m-3-4);
        \draw[loosely dotted,thick] (m-2-1)-- (m-4-3);
        \draw[loosely dotted,thick] (m-2-1)-- (m-4-1);
        \draw[loosely dotted,thick] (m-4-1)-- (m-4-3);
    \end{tikzpicture} \cdot 
    \begin{tikzpicture}[baseline=(current bounding box.center)]
        \matrix (m) [matrix of math nodes,nodes in empty cells,right delimiter={)},left delimiter={(},inner sep=-2pt,nodes={inner sep=1.2ex}]{
        a_1 \\
        \\
        a_m\\
        };
        \draw[loosely dotted,thick] (m-1-1)-- (m-3-1);
    \end{tikzpicture}= \begin{tikzpicture}[baseline=(current bounding box.center)]
        \matrix (m) [matrix of math nodes,nodes in empty cells,right delimiter={)},left delimiter={(},inner sep=-2pt,nodes={inner sep=1.2ex}]{
        a_1+\gamma\,\sum_{i=1, i\neq 1}^m a_i\\
        \\
        a_m+\gamma\,\sum_{i=1, i\neq m}^m a_i\\
        };
        \draw[loosely dotted,thick] (m-1-1)-- (m-3-1);
    \end{tikzpicture}.
    \end{equation*}
    The angle \(\theta_j\) between the original normal vector \(a_j\) and the normal vector of the associated facet of \(C_\gamma\), i.e., \(a_j+\gamma\,\sum_{i=1, i\neq m}^m a_i\) can be computed by solving:
    \begin{align*}
        \cos(\theta_j)
        =\frac{\bigl\langle a_j, \; a_j+\gamma\,\sum_{i=1, i\neq j}^m a_i\bigr\rangle}{\lVert a_j\rVert\cdot \bigl\lVert  a_j+\gamma\, \sum_{i=1, i\neq j}^m a_i \bigr\rVert}.
    \end{align*}
    We want to show, that \(\theta_1=\ldots=\theta_p\) by proving that the value of this term is independent of \(j\). We start with the numerator:
    
    \begin{align*}
        \biggl\langle a_j, \;a_j+\gamma \sum_{\substack{i=1\\ i\neq j}}^m a_i\biggr\rangle 
        = \underbracket[0.5pt]{\lVert a_j\rVert^2}_{=1}
        + \ \gamma\,\Bigl\langle a_j, \underbracket[0.5pt]{\sum_{\substack{i=1}}^m a_i}_{=S}\Bigr\rangle -\gamma\underbracket[0.5pt]{\lVert a_j\rVert^2}_{=1} =1+\gamma \,(\bar{\kappa}-1)
    \end{align*}
    Accordingly, the numerator is independent from \(j\). We will show the same for the denominator:
    \begin{align*}
        \Bigl\lVert &a_j+\gamma \,\sum_{\substack{i=1\\ i\neq j}}^m a_i \Bigr\rVert^2
        =\underbracket[0.5pt]{\lVert a_j\rVert^2}_{=1} +\ 2\,\gamma\, \underbracket[0.5pt]{\Bigl\langle a_j, \sum_{\substack{i=1\\ i\neq j}}^m a_i \Bigr\rangle}_{(\star)} 
        + \underbracket[0.5pt]{\Bigl\lVert\gamma \sum_{\substack{i=1\\ i\neq j}}^m a_i \Bigr\rVert^2}_{(\ast)}
    \end{align*}
    
    \begin{align*}
    (\star):&&&
        \Bigl\langle a_j, \sum_{\substack{i=1\\ i\neq j}}^m a_i \Bigr\rangle
        = \Bigl\langle a_j, \underbracket[0.5pt]{\sum_{i=1}^m a_i}_{=S} \Bigr\rangle -\lVert a_j\rVert^2= \bar{\kappa}-1\\
    (\ast): &&&
        \Bigl\lVert\gamma \sum_{\substack{i=1\\ i\neq j}}^m a_i \Bigr\rVert^2 
        =\gamma\, \Bigl(\Bigl\lVert\sum_{i=1}^m a_{i} \Bigr\rVert^2-
        2\,\Bigl\langle\underbracket[0.5pt]{\sum_{i=1}^m a_i, a_j\Bigr\rangle\!\!}_{=\bar{x}}\; + \underbracket[0.5pt]{\lVert a_j\rVert^2\!\!}_{=1}\;\Bigr)= \gamma \bigl(\lVert S\rVert^2-2\,\bar{\kappa}+1 \bigr)
    \end{align*}
Thus, the denominator is also independent of \(j\). Consequently, \(\theta_1=\dots=\theta_p\) and the ordering cone \(C\) opens symmetrically for increasing values of \(\gamma\).
\end{proof}

For a polyhedral ordering cone \(C\), the \emph{degree of acceptability based on \(\gamma\)-dominance} for \(y\) representing \(\bar{y}\) is defined as
\begin{align*}
    \gamma_{y\bar{y}}^C\coloneqq\gamma_{A_C\cdot y,A_C\cdot \bar{y}},
\end{align*} with \(A_C\in\R^{m\times p}\), defining matrix \(C\), i.\,e., \(C=\hcone(A_C)\).
Similarly to the degree of acceptability based on \(\varepsilon\)-dominance, \(\gamma_{y\bar{y}}^C\) is the smallest value of \(\gamma\) such that \(y\) is \(\gamma\)-dominating \(\bar{y}\). The \(\gamma\)-coverage value for a given non-dominated set \(Y_N\) and a subset \(R\subseteq Y_N\) representing it, is defined as the smallest value of \(\gamma\) such that every point in \(Y_N\) is \(\gamma\)-dominated by at least one point in \(R\). It is computed as
\begin{align*}
    \max_{y\in Y_N}\min_{r\in R} \gamma_{ry}^C.
\end{align*}
Analogously to Lemma~\ref{lem:eps-coverage}, for externally stable sets \(Y\) it holds that
\begin{align*}
    \max_{y\in Y_N}\min_{r\in R} \gamma_{ry}^C= \max_{y\in Y}\min_{r\in R} \gamma_{ry}^C.
\end{align*}

The \(\gamma\)-spacing value of a given representation \(R\subseteq Y_N\) is defined as the smallest value of \(\gamma\) such that at least one point in \(R\) \(\gamma\)-dominates another point in \(R\), i.\,e.,
\begin{align*}
    \min_{r,\bar{r}\in R} \gamma_{r\bar{r}}^C.
\end{align*} 
\begin{figure}[htbp]
    \centering
    \begin{subfigure}[t]{0.45\linewidth}
        \centering
        \resizebox{\linewidth}{!}{
            \begin{tikzpicture} [scale = 0.5]
            \fill[mycolor!20] (1,6) -- (-2/3,7) -- (10,7) -- (10,2.4) -- (1,6);
            \fill[mycolor!20] (4,4) -- (-1,7) -- (10,7) -- (10,1.6) -- (4,4);
            \fill[mycolor!20] (6,3) -- (-2/3,7) -- (10,7) -- (10,1.4) -- (6,3);
            \fill[mycolor!20] (9.5,1) -- (-0.5,7) -- (10,7) -- (10,0.7) -- (9.5,1);
            
            \fill[mycolor!40] (1,6) -- (0,7) -- (10,7) -- (10,6) --(1,6);
            \fill[mycolor!40] (4,4) -- (1,7) -- (10,7) -- (10,4) --(4,4);
            \fill[mycolor!40] (6,3) -- (2,7) -- (10,7) -- (10,3) --(6,3);
            \fill[mycolor!40] (9.5,1) -- (3.5,7) -- (10,7) -- (10,1) -- (9.5,1);
            
            \draw[->] (-1,0) -- (10,0) node[right] {$f_1$};
            \draw[->] (0,-1) -- (0,7) node[above] {$f_2$};
        
            \filldraw[] (1,6) node[above right] {$r^1$};
            \filldraw[] (4,4) node[above right] {$r^2$};
            \filldraw[] (6,3) node[above right] {$r^3$};
            \filldraw[] (9.5,1) node[above right] {$r^4$};
            
            \draw[draw=mycolor!70!black] plot[only marks, mark=*, mark size=1pt] coordinates {(0.3,6.5) (1.4,5.7) (3,4.7) (4.3,3.9) (5,3.6) (5.6,3.3) (7,2.7)};
            \draw plot[only marks, mark=x, mark size=3pt] coordinates {(1,6) (4,4) (6,3) (9.5,1)};
        \end{tikzpicture}
        }
        \caption{$\gamma\text{-coverage}$ with \(\bar{\gamma}=\frac{2}{3}\)}
    \end{subfigure}
    \hfill
    \begin{subfigure}[t]{0.45\linewidth}
        \centering
        \resizebox{\linewidth}{!}{
            \begin{tikzpicture} [scale = 0.5]
            \fill[mycolor!20] (1,6) -- (-0.5,7) -- (10,7) -- (10,3) -- (1,6);
            \fill[mycolor!20] (4,4) -- (-0.5,7) -- (10,7) -- (10,2) -- (4,4);
            \fill[mycolor!20] (6,3) -- (0,7) -- (10,7) -- (10,5/3) -- (6,3);
            \fill[mycolor!20] (9.5,1) -- (0.5,7) -- (10,7) -- (10,0.75) -- (9.5,1);
        
            \fill[mycolor!40] (1,6) -- (0,7) -- (10,7) -- (10,6) --(1,6);
            \fill[mycolor!40] (4,4) -- (1,7) -- (10,7) -- (10,4) --(4,4);
            \fill[mycolor!40] (6,3) -- (2,7) -- (10,7) -- (10,3) --(6,3);
            \fill[mycolor!40] (9.5,1) -- (3.5,7) -- (10,7) -- (10,1) -- (9.5,1);
            
            \draw[->] (-1,0) -- (10,0) node[right] {$f_1$};
            \draw[->] (0,-1) -- (0,7) node[above] {$f_2$};

            \draw[dashed, thick, mycolor!70!black] (4,4) -- (-0.5,7);
            \draw[dashed, thick, mycolor!70!black] (4,4) -- (10,2);
            
            \filldraw[] (1,6)  node[above right] {$r^1$};
            \filldraw[] (4,4) node[above right] {$r^2$};
            \filldraw[] (6,3) node[above right] {$r^3$};
            \filldraw[] (9.5,1) node[above right] {$r^4$};
            \draw plot[only marks, mark=x, mark size=3pt] coordinates {(1,6) (4,4) (6,3) (9.5,1)};
                    
        \end{tikzpicture}
        }
        \caption{\(\gamma\text{-spacing}\) with \(\hat{\gamma}=\frac{1}{2}\)}
    \end{subfigure}

    \caption{Representation \(R=\{r^1,r^2,r^3,r^4\} \) (marked with \(\times\)) and the other non-dominated points \(Y_N\setminus R\) (marked with \(\boldsymbol{\cdot}\)). The violet area depicts \(\bigcup_i (r^i+C)\) while the light violet area corresponds to \(\bigcup_i (r^i+C_\gamma)\). Note that \(R\) is not an optimal representation, as \(R'=\{r^2, r^3, r^4\}\) attains the same \(\gamma\)-coverage value and a better \(\gamma\)-spacing value.
    \label{fig:gammaRep}}
\end{figure} 
In Figure~\ref{fig:gammaRep} the \(\gamma\)-coverage value and \(\gamma\)-spacing value is illustrated for a bi-objective \ref{eq:MOP_C}, with the ordinal cone as ordering cone. Note that there does not necessarily exist a \(\gamma\in[0,1]\) for every subset \(R\subseteq Y_N\) such that each point in \(Y_N\) is \(\gamma\)-dominated by at least one point \(r\in R\). However, for any non-dominated set \(Y_N\) and any ordering cone \(C\), there exists a point \(y\in Y_N\) which \(\gamma\)-dominates all the other points in \(Y_N\) with respect to \(C_\gamma\) for some value of \(\gamma\in[0,1]\). This is due to the fact that for \(\gamma=1\) the optimization w.\,r.\,t.\ \(C_\gamma\) corresponds to the optimization of a weighted sum of the objectives (with weights given by the ordering cone \(C\)), whose optimal solution is attained in one of the efficient solutions (mapping to a point \(y\in Y_N\)).
Consequently, there always exists a representation for which the \(\gamma\)-coverage value is well-defined.

From the definition of \(\gamma\)-based degree of acceptability it follows
\begin{align*}
    \max_{y\in Y_N} \min_{r\in R} \gamma_{ry}^C&= \max_{y\in (A_C\cdot Y)_N} \min_{r\in A_C\cdot R} \gamma_{ry}, \\
    \min_{\substack{r, \bar{r} \in R \\ r \neq \bar{r}}} \gamma_{r\bar{r}}^C &= \min_{\substack{r, \bar{r} \in A_C\cdot R \\ r \neq \bar{r}}} \gamma_{r\bar{r}}.
\end{align*}
Consequently, the \(\gamma\)-\ref{eq:representationProblem} for \ref{eq:MOP_C} can be reformulated into a \(\gamma\)-\ref{eq:representationProblem} for a Pareto optimization problem.

\subsection{Point-based Representation Relations for Polyhedral Ordering Cones}

Throughout this chapter, several representation relations have been discussed to evaluate their suitability for finding representations for non-dominated sets of multi-objective optimization problems with polyhedral ordering cones. Thereby, the study focused particularly on the equivalence of representation relations for polyhedral cones with the representation relation for the associated Pareto optimization problem. This would allow us to apply existing representation methods developed for the Pareto case to this more general setting by transforming the cone optimization problem into a Pareto optimization problem, and, subsequently, transforming the resulting representation back into the general cone setting.

For the distance-based relations, a reformulation of \ref{eq:representationProblem} in the Pareto structure is generally not equivalent to the original representation problem. However, the newly introduced cone-induced distance comply with the reformulation. Both the \(\varepsilon\)-dominance and \(\gamma\)-dominance representation relations yield equivalent results for the cone optimization problem and for the associated Pareto optimization problem. When using either of the discussed representation relations, considering polyhedral ordering cones does not impose any additional restrictions compared to the Pareto cone. In the case of \(\varepsilon\)-dominance, the non-dominated set has to be contained in the ordering cone, i.\,e., \(Y_N\subset C\). However, this restriction also applies in the Pareto case. 

A comprehensive analysis of multiplicative representation relations (namely, \(\varepsilon\)-do\-mi\-nance and \(\gamma\)-do\-mi\-nance) is provided in \cite{humbertropers25representation}, where the \(\varepsilon\)-do\-mi\-nance relation is identified as the most suitable relation for representation problems.

\section{Outlook} \label{sec:Outlook}

The aim of this work was to investigate whether representation problems for multi-objective optimization problems with a polyhedral ordering cone can be reformulated as representation problems of the corresponding Pareto optimization problem. We adopted and generalized the formulation introduced in the article by \cite{humbertropers25representation}, which extends the three-dimensional framework proposed by \cite{sayn2000representation}.

Besides representation problems based on a point-based representation relation, representations can also be defined in terms of the hypervolume indicator. In this setting, the quality of a representation is assessed by how well the representation set covers the non-dominated set as a whole, rather than by considering the representation of individual non-dominated points. \cite{emmerich13cone} describes this approach for multi-objective optimization problems with general ordering cones. In the setting considered here, the corresponding hypervolume is partitioned into parallelepipeds instead of the cuboids used in the Pareto case. In \cite{klamroth2023reformulation} it is shown that this hypervolume representation problem for the cone optimization problem is equivalent to the hypervolume problem for the Pareto optimization problem. 

Future research could investigate representation problems for multi-objective optimization problems with non-polyhedral ordering cones~\citep{engau-2009}. An interesting question is whether the representation problem can still be transformed or approximated w.\,r.\,t.\ a corresponding Pareto optimization problem or whether alternative methods need to be developed. 

It is also interesting to consider multi-objective optimization problems with variable ordering structures~\citep{eichfelder-2014,deb23generalized,durea25vector}. In this setting, the shape of the set dominated by a vector \(y\in Y\) depends on \(y\). In general, the dominance structure is not transitive anymore, which leads to new challenges in determining a representation.

Another approach is the representation in the decision space. This could provide the decision maker not only with solutions having different objective values but also with different decisions leading to the same or similar non-dominated points. From this perspective, both the objective space and the decision space should be taken into account when selecting a representation.

Overall, these directions highlight the potential, on the one hand, to expand the definition of representations and, on the other hand, to find representations for additional multi-criteria optimization problems with more complex ordering structures.

\subsection*{Acknowledgements}\noindent
This work is partially supported by the  project SAFeR: Safe, Algorithm-based Footpath Routing funded by the EU program EFRE/JTF NeueWege.IN.NRW (EFRE 20800941).

\bibliographystyle{abbrvnat}

\end{document}